\newcommand{\FF}{{\mathbb{F}}}
\newcommand{\ZZ}{\mathbb{Z}}
\newcommand{\supp}{\mathrm{supp}}
\newcommand{\wt}{\mathrm{wt}}
\newcommand{\swe}{\mathrm{swe}}
\newcommand{\sse}{\mathrm{sse}}
\newcommand{\cv}{\mathbf{c}}
\newcommand{\gv}{\mathbf{g}}
\newcommand{\uv}{\mathbf{u}}
\newcommand{\vv}{\mathbf{v}}
\newcommand{\wv}{\mathbf{w}}
\newcommand{\xv}{\mathbf{x}}
\newcommand{\yv}{\mathbf{y}}
\newcommand{\zv}{\mathbf{z}}
\newtheorem{theorem}{Theorem}
\newtheorem{prop}[theorem]{Proposition}
\newtheorem{lem}[theorem]{Lemma}
\newtheorem{cor}[theorem]{Corollary}
\theoremstyle{definition}%
\newtheorem{ex}{Example}%
\newtheorem{rem}{Remark}%
\title{A multiset approach to MacWilliams identities}
\author{Hopein Christofen Tang\thanks{School of Mathematics and Statistics, UNSW Sydney, NSW 2052, Australia, hopein.tang@unsw.edu.au}}
\date{}
\begin{document}

\maketitle
\abstract{We interpret the symmetrized weight enumerator of linear codes over finite commutative Frobenius rings as a summation over multisets and thereby provide a new proof of the MacWilliams identity for the symmetrized weight enumerator. The proof and the identity are expressed in combinatorial terms that do not require generating characters. We also generalize the symmetrized weight enumerator with respect to supports and codeword tuples, and our multiset approach enables us to derive new and general MacWilliams identities expressed in combinatorial terms.}

\maketitle

\section{Introduction}\label{sec1}

\noindent One of the most well-known and important results in coding theory is the classic MacWilliams identity \cite{macwilliams63}, which relates the Hamming weight enumerator of a code over a finite field to that of its dual code. The MacWilliams identity has been generalized with respect to many other enumerators over various finite rings; see \cite{DoSk2002,GaYa08,HoLa01,Klove,Wood1999} for some of the well-known generalizations. In this paper, we specifically consider the symmetrized weight enumerator of linear codes over finite Frobenius rings, which is one of the generalizations of the Hamming weight enumerator considered in \cite{Wood1999}. The MacWilliams identity with respect to this enumerator was proven by Wood \cite{Wood1999} via a character-theoretical approach and expressed in terms of sums involving the generating character of the ring. For some relatively recent applications of Wood's MacWilliams identity for the symmetrized weight enumerator with respect to specific finite Frobenius rings, see \cite{CaAs20, CeDeDo14, DoSaSz19, IrDj18, IrDj23}.

The main purpose of this paper is to offer a new method for obtaining MacWilliams identities for the symmetrized weight enumerator of linear codes over finite commutative Frobenius rings. Our approach is based on the observation that the symmetrized weight enumerator can be expressed as a summation over multisets. This approach enables us to obtain a new proof of the MacWilliams identity for the symmetrized weight enumerator without involving generating characters; see Theorem~\ref{thm:main}. Our main result in Theorem~\ref{thm:main} reveals a previously unknown relationship between the MacWilliams identity and the structural aspects of the poset of principal ideals of the ring, such as its adjacency matrix and the cardinalities of the principal ideals. In particular, we show that it is possible to obtain the MacWilliams identity for the symmetrized weight enumerator solely from three matrices related to the poset of principal ideals of the ring and their dual; the generating character of the ring need not be known.  

The paper is organized as follows. We provide basic definitions and notations in Section~\ref{sec:Preliminaries}. In Section~\ref{sec:main}, we introduce our multiset approach and prove the main result of the paper (Theorem~\ref{thm:main}).  
In Section~\ref{sec:applications}, we apply our MacWilliams identity in Theorem~\ref{thm:main} to finite commutative chain rings and principal ideal rings, and obtain more specific results; see Corollaries~\ref{cor:FCR} and~\ref{cor:FPIR}. We also present examples to show that our MacWilliams identity in Theorem~\ref{thm:main} yields the same specific identities as the ones obtained from Wood's result~\cite{Wood1999}; see Examples~\ref{ex:Z4/<x^3-2,2x>}, \ref{ex:Z12} and \ref{ex:F2[u,v]}. Furthermore, we 
derive some of the known MacWilliams identities for other weight enumerators. In particular, we consider the MacWilliams identity for the (symmetrized) Lee weight enumerator discussed in \cite{LiuLiu15} and give a new proof of the MacWilliams identity for the Hamming weight enumerator; 
see Examples~\ref{ex:Lee} and \ref{ex:Hamming}. 

In Section~\ref{sec:generalizations}, we consider several generalizations of the symmetrized weight enumerator with respect to symmetrized supports and codeword tuples; see \cite{britz02,BrShWe15,shiromoto96} for similar generalizations of the Hamming weight enumerator and \cite{Kaplan2014, Siap1999, SiRC2000} for further known generalizations. By applying our multiset approach to these general enumerators, we derive general MacWilliams identities that do not require generating characters. These identities generalize Wood's identity~\cite{Wood1999} and Theorem~\ref{thm:main}; see Theorems~\ref{thm:main-support} and \ref{thm:main-tuple} and Corollary~\ref{cor:SWE-tuple}. To the best of our knowledge, the MacWilliams identities in Theorems~\ref{thm:main-support} and \ref{thm:main-tuple} are new. 

\section{Preliminaries}
\label{sec:Preliminaries}
\subsection{Linear codes over finite Frobenius rings}
Throughout this paper, let $R$ denote a finite ring. For $a\in R$, define $aR:=\{ab : b\in R\}$ and $aR^\times:=\{ab : b\in R^\times\}$, where $R^\times$ denotes the set of all units of $R$. 
A~\emph{code} $C$ of length $n$ over $R$ is a non-empty subset of~$R^n$. The~code $C$ is \emph{linear} if it is a submodule of~$R^n$. A~matrix $\mathbf{G}$ over $R$ is a \emph{generator matrix} of a linear code~$C$ over $R$ if its rows span~$C$. The \emph{inner product} of two codewords $\uv=(u_1,\ldots,u_n)$ and $\vv=(v_1,\ldots,v_n)$ in $R^n$
is 
$\uv\cdot\vv:=\sum_{\ell=1}^n u_\ell v_\ell\in R$ and the \emph{dual} $C^\perp$ of a code $C$ over $R$ is the linear code 
\[
  C^\perp:=\{\vv \in R^n:~\uv \cdot \vv=0 \text{ for all }\uv \in C\}\,.
\]
A finite ring $R$ is \emph{Frobenius} if $R/\mathrm{Rad}(R)\cong \mathrm{Soc}(_R R)$ as left $R$-modules and $R/\mathrm{Rad}(R)\cong \mathrm{Soc}(R_R)$ as right $R$-modules, where $\mathrm{Rad}(R)$ is the Jacobson radical of $R$ and $\mathrm{Soc}(_R M)$ (resp. $\mathrm{Soc}(M_R)$)  denotes the socle of a left (resp.\ right) $R$-module $M$; see \cite{Lam1999}. There are several known characterizations of finite Frobenius rings. In this paper, we consider the following well-known characterization; see \cite{BrShWe15,Honold2001,HoLa01,Wood1999,Wood2009}.
\begin{lem}\label{lem:Wood}
    A finite ring $R$ is Frobenius if and only if $|C|\cdot|C^\perp|=R^n$ for any linear code~$C$ of length $n$ over $R$.
\end{lem}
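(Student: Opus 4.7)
The plan is to use Pontryagin duality on the additive group of $R^n$ for the forward direction, and a structural/counting argument for the converse.

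For the forward direction, assume $R$ is Frobenius. I would invoke the classical equivalence (see \cite{Honold2001,Wood1999}) that finite Frobenius rings are exactly those admitting a \emph{generating character}, i.e., an additive character $\chi: R\to\CC^\times$ whose kernel contains no nonzero one-sided ideal. Define $\beta: R^n\times R^n\to\CC^\times$ by $\beta(\uv,\vv):=\chi(\uv\cdot\vv)$. The generating property makes $\beta$ a perfect pairing: if $\beta(\uv,\vv)=1$ for every $\uv\in R^n$, then taking $\uv$ to be a scalar multiple of a standard basis vector gives $\chi(rv_j)=1$ for every $r\in R$ and every coordinate $j$, which forces $Rv_j\subseteq\ker\chi$ and hence $v_j=0$. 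For a linear code $C$, I would then show that the character-annihilator of $C$ coincides with $C^\perp$: if $\chi(\uv\cdot\vv)=1$ for every $\uv\in C$, then the one-sided ideal generated by $\{\uv\cdot\vv:\uv\in C\}$ lies in $\ker\chi$ and is therefore zero, so $\vv\in C^\perp$. Standard Pontryagin duality for finite abelian groups then yields $|C|\cdot|C^\perp|=|R^n|=|R|^n$.

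For the converse, I would proceed contrapositively. If $|C|\cdot|C^\perp|=|R|^n$ holds for every linear code, applying this to codes of length one gives $|I|\cdot|I^\perp|=|R|$ for every left ideal $I\leq R$, so the map $I\mapsto I^\perp$ is an inclusion-reversing involution of the lattice of left ideals (and symmetrically for right ideals). Iterating along the filtration by powers of $\mathrm{Rad}(R)$ and comparing cardinalities level by level should yield an isomorphism $R/\mathrm{Rad}(R)\cong\mathrm{Soc}(_R R)$, and symmetrically on the right, which is the defining Frobenius property. The main obstacle lies precisely here: extracting a module isomorphism from a purely numerical duality on all codes requires careful bookkeeping of the multiplicities of simple modules in the top and in the socle, and this is the substantive content of the proofs in \cite{Honold2001,Wood1999}. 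The forward direction, by contrast, is essentially Pontryagin duality once a generating character has been produced.
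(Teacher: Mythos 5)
The paper does not prove this lemma; it is stated as a well-known characterization with citations to the literature (including Honold 2001 and Wood 1999), so there is no in-paper proof to compare against. Your forward direction is sound and follows the standard argument in those references: a Frobenius ring admits a generating character $\chi$, the pairing $(\uv,\vv)\mapsto\chi(\uv\cdot\vv)$ is nondegenerate, and the character-theoretic annihilator of a linear code $C$ equals $C^\perp$ because the one-sided ideal generated by $\{\uv\cdot\vv:\uv\in C\}$ lies in $\ker\chi$ and is therefore zero; Pontryagin duality on the finite abelian group $(R^n,+)$ then gives $|C|\cdot|C^\perp|=|R|^n$.

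The converse direction has a genuine gap, which you yourself flag but do not resolve. Specializing to length one does give $|I|\cdot|I^\perp|=|R|$ for every left ideal $I$, hence an order-reversing bijection between the left- and right-ideal lattices, and in particular $|R/\mathrm{Rad}(R)|=|\mathrm{Soc}(R_R)|$. But two semisimple modules over a finite ring that have the same cardinality need not be isomorphic as modules: one must match the multiplicities of each simple isomorphism type, and the purely numerical duality on ideals does not by itself identify which simples occur in the socle versus in the top. The sentence "iterating along the filtration by powers of $\mathrm{Rad}(R)$ and comparing cardinalities level by level should yield an isomorphism" is therefore a hope rather than an argument; it would need to be replaced by actual bookkeeping of simple constituents, or, as in the cited references, by showing that the counting hypothesis forces the character group of $R$ to be cyclic as a one-sided $R$-module, which is an equivalent formulation of the Frobenius property. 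Your sketch correctly names the ingredients, but the converse remains unproved, and that converse is precisely the substantive content that the lemma, as a quoted result, relies on.
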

\noindent This property gives us the following useful corollary.
\begin{cor}\label{cor:span}
    Let $C$ be a linear code of length $n$ over a finite commutative Frobenius ring~$R$, and let $\mathbf{G}=(\gv_1 \cdots \gv_n)\in R^{m\times n}$ be a generator matrix of~$C$. Then 
    \[|\mathrm{span}\{\gv_1,\dots,\gv_n\}|=|C|\,.\]
\end{cor}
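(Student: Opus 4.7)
The plan is to realize $\mathrm{span}\{\gv_1,\dots,\gv_n\}$ as the image of an $R$-module homomorphism whose kernel is exactly $C^\perp$, so that Lemma~\ref{lem:Wood} delivers the desired cardinality statement.

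Concretely, I would introduce the map $\phi:R^n\to R^m$ defined by $\vv=(v_1,\dots,v_n)\mapsto \mathbf{G}\vv^T=\sum_{i=1}^n v_i\gv_i$. Because $R$ is commutative, there is no left/right ambiguity and $\phi$ is an $R$-module homomorphism. Its image is, tautologically, the $R$-span of the columns of $\mathbf{G}$, namely $\mathrm{span}\{\gv_1,\dots,\gv_n\}\seq R^m$.

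The key step is the identification $\ker\phi=C^\perp$. The $i$th coordinate of $\phi(\vv)$ is precisely the inner product of $\vv$ with the $i$th row of $\mathbf{G}$; hence $\phi(\vv)=0$ iff $\vv$ is orthogonal to every row of $\mathbf{G}$. Since the rows of $\mathbf{G}$ span $C$, orthogonality to every row is equivalent to orthogonality to every codeword of $C$, i.e.\ to $\vv\in C^\perp$. The first isomorphism theorem for $R$-modules then yields $R^n/C^\perp\cong \mathrm{span}\{\gv_1,\dots,\gv_n\}$, so
\[
\bigl|\mathrm{span}\{\gv_1,\dots,\gv_n\}\bigr|=\frac{|R|^n}{|C^\perp|}.
\]

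Finally, since $R$ is Frobenius, Lemma~\ref{lem:Wood} gives $|C|\cdot|C^\perp|=|R|^n$, which rearranges to $|\mathrm{span}\{\gv_1,\dots,\gv_n\}|=|C|$. There is no genuine obstacle here: the Frobenius hypothesis does all the work through Lemma~\ref{lem:Wood}, and the only piece that needs careful verification is the equality $\ker\phi=C^\perp$, which is immediate from the defining property of a generator matrix.
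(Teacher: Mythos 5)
Your proof is correct and coincides with the paper's argument: both define the evaluation map $R^n\to R^m$, $\vv\mapsto\sum_\ell v_\ell\gv_\ell$, identify its image with $\mathrm{span}\{\gv_1,\dots,\gv_n\}$ and its kernel with $C^\perp$, and then combine the Module Isomorphism Theorem with Lemma~\ref{lem:Wood}. You merely spell out the verification of $\ker\phi=C^\perp$, which the paper leaves as an exercise.
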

\begin{proof}
    Consider a module homomorphism 
    \begin{eqnarray*}
  \pi: & R^n &\rightarrow \quad \;\;R^m\\
          & (c_1,\dots,c_n)                                   &\mapsto\;\;\sum_{\ell=1}^n c_\ell \gv_\ell\,,
\end{eqnarray*}
and observe that $\mathrm{im}(\pi)=\mathrm{span}\{\gv_1,\dots,\gv_n\}$. It is not hard to show that $\ker(\pi)=C^\perp$. The result follows from the Module Isomorphism Theorem and Lemma~\ref{lem:Wood}.
\end{proof}
\noindent It also follows immediately from Lemma~\ref{lem:Wood} that the dual of the dual of a linear code~$C$ over a finite commutative Frobenius ring $R$ is $C$ since $C\subseteq (C^\perp)^\perp$ by definition.
\begin{lem}\label{lem:AA}
    Let $C$ be a linear code over a finite commutative Frobenius ring~$R$. Then $(C^\perp)^\perp=C$.
\end{lem}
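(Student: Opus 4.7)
The plan is to combine the trivial inclusion $C \subseteq (C^\perp)^\perp$, already noted at the end of the excerpt, with a cardinality count obtained from two applications of Lemma~\ref{lem:Wood}. Once I know $|C| = |(C^\perp)^\perp|$, the inclusion of finite sets forces equality, which is exactly the claim of the lemma.

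First I will record the inclusion carefully: for every $\uv \in C$ and every $\vv \in C^\perp$, the defining relation $\uv \cdot \vv = 0$ together with the commutativity of $R$ yields $\vv \cdot \uv = 0$, so $\uv$ satisfies the defining condition of $(C^\perp)^\perp$; hence $C \subseteq (C^\perp)^\perp$. Next I will apply Lemma~\ref{lem:Wood} to the linear code $C$ of length $n$ over the finite commutative Frobenius ring $R$ to obtain $|C| \cdot |C^\perp| = |R|^n$. Since $C^\perp$ is itself a linear code of length $n$ over $R$, a second application of the same lemma gives $|C^\perp| \cdot |(C^\perp)^\perp| = |R|^n$. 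Comparing these two identities and cancelling the nonzero integer $|C^\perp|$ produces $|C| = |(C^\perp)^\perp|$, and combined with the inclusion this forces $(C^\perp)^\perp = C$.

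There is essentially no genuine obstacle here: the whole argument is a formal consequence of Lemma~\ref{lem:Wood}. Commutativity of $R$ enters only implicitly, so that $C^\perp$ is unambiguous (left and right duals coincide) and is itself a linear code of length $n$ to which the lemma can be applied a second time. If anything, the only mild subtlety is recognizing that Lemma~\ref{lem:Wood} applies to any linear code, in particular to $C^\perp$, which is what allows the double dual to be fed back into the cardinality formula.
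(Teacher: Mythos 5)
Your argument is correct and matches the paper's (unstated but clearly intended) reasoning: the paper remarks that the lemma ``follows immediately from Lemma~\ref{lem:Wood}'' together with the trivial inclusion $C\subseteq (C^\perp)^\perp$, which is exactly the cardinality-plus-inclusion argument you spell out. You simply make the two applications of Lemma~\ref{lem:Wood} and the role of commutativity explicit, which the paper leaves to the reader.
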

\noindent The next corollary follows from the definition of the dual and the lemma above.
\begin{cor}\label{cor:dual}
    Let $R$ be a finite commutative Frobenius ring. Then for any $c\in R$,
    \[(cR)^\perp=\{r\in R\;\colon\;cr=0\}\,.\]
    Moreover, for any $a,b\in R$, $aR\subseteq bR$ if and only if $(bR)^\perp\subseteq (aR)^\perp$.
\end{cor}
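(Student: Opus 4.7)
The plan is to treat this corollary as two independent unpackings of the definition of the dual, with Lemma~\ref{lem:AA} entering only for one implication. Since the statement is an elementary consequence of commutativity and the double-dual identity, I do not expect any genuine obstacle; the only care needed is to keep track of which direction actually requires Lemma~\ref{lem:AA}.

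For the first equality $(cR)^\perp = \{r \in R : cr = 0\}$, I would view $cR$ as a linear code of length $1$ over $R$, so that by definition $(cR)^\perp = \{r \in R : ur = 0 \text{ for all } u \in cR\}$. Specializing $u = c$ (using $c = c\cdot 1 \in cR$) immediately yields the inclusion $\subseteq$. For the reverse inclusion, I would use commutativity: every $u \in cR$ has the form $cs$, and if $cr = 0$ then $ur = csr = (cr)s = 0$, so $r \in (cR)^\perp$. No appeal to Lemma~\ref{lem:AA} is needed here; commutativity is used precisely to avoid distinguishing left and right annihilators.

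For the second assertion, the forward direction is the standard inclusion-reversing property of the dual and falls straight out of definitions: if $aR \seq bR$ and $r \in (bR)^\perp$, then $r$ annihilates every element of $bR$ and hence every element of the smaller set $aR$, so $r \in (aR)^\perp$. The backward direction is where Lemma~\ref{lem:AA} is essential. Given $(bR)^\perp \seq (aR)^\perp$, I would apply the same inclusion-reversing observation to these two principal ideals (viewed as length-$1$ linear codes) to obtain $((aR)^\perp)^\perp \seq ((bR)^\perp)^\perp$, and then invoke Lemma~\ref{lem:AA} to collapse the double duals, giving $aR \seq bR$. The single substantive step is this last application of Lemma~\ref{lem:AA}; everything else is routine unpacking of definitions.
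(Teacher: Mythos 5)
Your proof is correct and makes explicit precisely the argument the paper leaves implicit (the paper merely remarks that the corollary "follows from the definition of the dual and the lemma above"): unpack the definition of the dual of a length-$1$ code for the first equality, use inclusion-reversal of duals for the forward implication, and combine inclusion-reversal with Lemma~\ref{lem:AA} for the converse. One small imprecision: you call $(aR)^\perp$ and $(bR)^\perp$ ``principal ideals,'' but they need not be principal; this is harmless, since the inclusion-reversing property you invoke holds for arbitrary linear codes, not just principal ideals.
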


\subsection{Hamming support and Hamming weight enumerator}
The \emph{Hamming support} of $\cv=(c_1,\ldots,c_n)\in~R^n$ is
\[
  \supp(\cv):=\{\ell\in [n] : c_\ell\neq 0\}\,,
\]
where $[n]:=\{1,\dots,n\}$. The \emph{Hamming weight} of $\cv\!\in \!R^n$ is $\wt(\cv):=|\supp(\cv)|$ and the \emph{Hamming weight enumerator} of a linear code $C$ of length $n$ over $R$ is
\begin{equation*}
    W_C(x,y):=\sum_{\cv\in C}x^{n-\wt(\cv)}y^{\wt(\cv)}.
\end{equation*}
It is well-known that the Hamming weight enumerator of a linear code $C$ over a finite Frobenius ring $R$ is related to the Hamming weight enumerator of $C^\perp$ via the MacWilliams identity as follows.
\begin{theorem}\label{thm:MW-HWE-FFR} {\rm \cite{ BrShWe15, HoLa01, Wood1999}}
    Let $C$ be a linear code over a finite Frobenius ring $R$. Then
    \begin{equation}\label{eq:MW-HWE}
        W_{C^\perp}(x,y)=\frac{1}{|C|}W_C(x+(|R|-1)y,x-y)\,.
    \end{equation}
\end{theorem}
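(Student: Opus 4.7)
The plan is to use the classical character-theoretic proof, which is the most direct route to~(\ref{eq:MW-HWE}), even though the paper will later develop an alternative that avoids generating characters. The argument rests on two ingredients: the existence of a \emph{generating character} $\chi\colon (R,+)\to\CC^{\times}$ of the finite Frobenius ring $R$ — with the defining property that $\sum_{c\in R}\chi(uc)=0$ for every nonzero $u\in R$ — together with a Poisson-type summation identity relating sums over $C$ and $C^\perp$.

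First, I would introduce the monomial-valued function
\[
F\colon R^n\to\ZZ[x,y],\quad F(\cv)=\prod_{\ell=1}^{n}f(c_\ell), \quad \text{where } f(0)=x \text{ and } f(c)=y \text{ for } c\neq 0,
\]
so that $W_C(x,y)=\sum_{\cv\in C}F(\cv)$. Its character-based Fourier transform
\[
\widehat F(\uv):=\sum_{\cv\in R^n}\chi(\uv\cdot\cv)\,F(\cv)
\]
factors coordinate-wise as $\widehat F(\uv)=\prod_{\ell=1}^{n}\widehat f(u_\ell)$, and a one-variable computation yields $\widehat f(0)=x+(|R|-1)y$, together with $\widehat f(u)=x-y$ for $u\neq 0$ (using $\sum_{c\in R}\chi(uc)=0$). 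Hence $\widehat F(\uv)=\bigl(x+(|R|-1)y\bigr)^{n-\wt(\uv)}(x-y)^{\wt(\uv)}$.

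Second, I would establish the Poisson-type identity
\[
\sum_{\vv\in C^\perp}F(\vv)=\frac{1}{|C|}\sum_{\uv\in C}\widehat F(\uv)
\]
by swapping the order of summation in $\sum_{\uv\in C}\sum_{\cv\in R^n}\chi(\uv\cdot\cv)F(\cv)$ and invoking the orthogonality
\[
\sum_{\uv\in C}\chi(\uv\cdot\vv)=\begin{cases}|C| & \text{if } \vv\in C^\perp,\\ 0 & \text{otherwise.}\end{cases}
\]
Substituting the explicit form of $\widehat F$ then yields~(\ref{eq:MW-HWE}).

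The main obstacle is precisely the vanishing case of this orthogonality relation, where the Frobenius hypothesis is essential: for $\vv\notin C^\perp$ one must show that the additive character $\uv\mapsto\chi(\uv\cdot\vv)$ is nontrivial on $C$, and this rests on the fact that a generating character of a Frobenius ring has no nonzero one-sided ideal in its kernel (equivalently, Lemma~\ref{lem:Wood} combined with a dimension count of characters of $R^n$ trivial on $C$). Once this non-triviality is in hand, the remaining steps are formal manipulations.
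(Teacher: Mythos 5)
Your proof is correct and is precisely the classical character-theoretic argument that the cited references use. The paper itself does not prove Theorem~\ref{thm:MW-HWE-FFR} where it is stated --- it is given as a cited known result --- but it does supply its own alternative derivation later, in Example~\ref{ex:Hamming}, and the two approaches are genuinely different. Your route computes the transform $\widehat F(\uv)=\bigl(x+(|R|-1)y\bigr)^{n-\wt(\uv)}(x-y)^{\wt(\uv)}$ coordinate-by-coordinate and then applies the Poisson-type identity $\sum_{\vv\in C^\perp}F(\vv)=\tfrac{1}{|C|}\sum_{\uv\in C}\widehat F(\uv)$; the Frobenius hypothesis enters only through the character orthogonality on $C$, which is justified by observing that $\{\uv\cdot\vv : \uv\in C\}$ is a nonzero one-sided ideal whenever $\vv\notin C^\perp$, so the generating character restricts nontrivially to it. This works over arbitrary (possibly non-commutative) finite Frobenius rings, which matches the generality of the theorem statement. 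The paper's Example~\ref{ex:Hamming} instead specializes the symmetrized-weight-enumerator identity $\swe_{C^\perp}(\xv)=\tfrac{1}{|C|}\swe_C(\mathbf{QDA}^{-1}\xv)$ of Theorem~\ref{thm:main} at $x_0=x$ and $x_1=\cdots=x_t=y$, and evaluates the required entries and row sums of $\mathbf{S}=\mathbf{QDA}^{-1}$ purely combinatorially via the M\"obius function of the poset of principal ideals (Equations~(\ref{eq:mobius-1}) and~(\ref{eq:mobius-2})). That approach deliberately avoids generating characters --- the whole point of the paper --- but, since Theorem~\ref{thm:main} is proved only for commutative Frobenius rings, it recovers only the commutative case of the present theorem, whereas your argument covers the theorem in the generality in which it is stated.

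One small imprecision worth flagging: you describe the vanishing sum $\sum_{c\in R}\chi(uc)=0$ for all nonzero $u$ as the \emph{defining} property of a generating character. The usual definition is that $\ker\chi$ contains no nonzero one-sided ideal; the vanishing-sum property is a consequence (the image $uR$ is a nonzero right ideal when $u\neq 0$, and a nontrivial character summed over a subgroup vanishes). This does not affect the correctness of your proof --- the consequence is exactly what you invoke --- but the more precise form of the property is also what you need for the orthogonality step on $C$, so it is cleaner to state it from the outset.
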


\subsection{Symmetrized support and symmetrized weight enumerator}\label{subsection:swe}
We define the symmetrized weight enumerator following the definition in \cite{Wood1999}. Define an equivalence relation $\approx$ on a finite commutative Frobenius ring $R$ by $r\approx s$ if $r=us$ for some unit $u\in R$. Note that $r\approx s$ if and only if $rR=sR$; see \cite[Proposition 5.1]{Wood1999}. Throughout the paper, let $t$ denote the number of nonzero principal ideals of $R$. Since we know that $R$ has exactly $t+1$ equivalence classes with respect to the relation $\approx$, we can find a set of representatives $\{a_0,a_1,\dots,a_t\}\subseteq R$ such that $a_i R\neq a_j R$ for any distinct $i,j\in\{0,1,\dots,t\}$. Throughout this paper, $a_0, a_1,\dots,a_t$ will always denote these representatives.

Now for each $i\in\{0,1,\dots,t\}$ and $\cv=(c_1,\ldots,c_n)\in~R^n$, define
\begin{equation}\label{eq:def-Si}
  S_i(\cv):=\{\ell\in [n] : c_\ell\approx a_i\}=\{\ell\in [n] : c_\ell R= a_i R\}\,.  
\end{equation}
We will refer to $S_0(\cv),S_1(\cv),\dots,S_t(\cv)$ as the \emph{symmetrized supports} of $\cv$. Additionally, for the rest of this paper, let
\[\wv:=\begin{pmatrix}
    w_0\\w_1\\\vdots\\ w_t
\end{pmatrix},\quad \xv:=\begin{pmatrix}
    x_0\\x_1\\\vdots\\ x_t
\end{pmatrix},\quad\yv:=\begin{pmatrix}
    y_0\\y_1\\\vdots\\ y_t
\end{pmatrix},\quad\zv:=\begin{pmatrix}
    z_0\\z_1\\\vdots\\ z_t
\end{pmatrix}\quad\]
be vectors of indeterminates indexed by $\{0,1,\dots,t\}$. The \emph{symmetrized weight enumerator} of a linear code $C$ of length $n$ over a finite commutative Frobenius ring $R$ is defined as
\begin{equation}\label{eq:SWE}
    \swe_C(\xv)=\swe_C(x_0,x_1,\dots,x_t)
    :=\sum_{\cv\in C} x_0^{\mathrm{swc}_0(\cv)}x_1^{\mathrm{swc}_1(\cv)}\cdots x_t^{\mathrm{swc}_t(\cv)},
\end{equation}
where for each $i\in\{0,1,\dots,t\}$, \[\mathrm{swc}_i(\cv):=|S_i(\cv)|\]
is the \emph{symmetrized weight composition} of $\cv$. Note that if $a_0=0$, then
\begin{equation}\label{eq:HWE-SWE}
    W_C(x,y)=\swe_C(x,y,\dots,y).
\end{equation}

It is known from \cite{Wood1999} that the following MacWilliams identity holds for the symmetrized weight enumerator of linear codes over finite commutative Frobenius rings.
\begin{theorem}\label{thm:MW-Wood} {\rm\cite[Theorem 8.4]{Wood1999}}
    Let $C$ be a linear code of length $n$ over a finite commutative Frobenius ring~$R$, and let $\chi:R\rightarrow\mathbb{C}$ be a generating character of $R$. Moreover, let $\mathbf{S}=(s_{ij})$ be a $(t+1)\times (t+1)$ matrix whose rows and columns are indexed by $\{0,1,\dots,t\}$, where $s_{ij}:=\sum_{r\,\approx\,a_j}\chi(ra_i).$
    Then
    \[
    \swe_{C^\perp}(\xv)=\frac{1}{|C|}\swe_C(\mathbf{S}\xv)\,.
    \]
\end{theorem}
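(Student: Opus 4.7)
The plan is to derive Theorem~\ref{thm:MW-Wood} by specializing the classical MacWilliams identity for the complete weight enumerator over a finite Frobenius ring. Recall that the complete weight enumerator is
\[
\mathrm{cwe}_C(X_r:r\in R):=\sum_{\cv\in C}\prod_{r\in R}X_r^{n_r(\cv)},
\]
where $n_r(\cv)$ counts the coordinates of $\cv$ equal to $r$, and that the generating character $\chi$ of $R$ gives the MacWilliams-type identity
\[
\mathrm{cwe}_{C^\perp}(X_r:r\in R)=\frac{1}{|C|}\,\mathrm{cwe}_C\!\bigl(\widehat{X}_r:r\in R\bigr),\qquad \widehat{X}_r:=\sum_{s\in R}\chi(rs)\,X_s.
\]
This identity, for finite (not necessarily commutative) Frobenius rings, is a well-known consequence of a Poisson-summation computation on the Fourier pairing $(\uv,\vv)\mapsto\chi(\uv\cdot\vv)$, and I would take it as the starting point.

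Next I would observe that $\swe_C(\xv)$ is the specialization of $\mathrm{cwe}_C$ obtained by setting $X_r=x_j$ whenever $r\approx a_j$: under this substitution, $\prod_{r\in R}X_r^{n_r(\cv)}$ collapses to $\prod_{j=0}^{t}x_j^{|S_j(\cv)|}$. Applying the substitution to the left-hand side of the cwe identity therefore produces $\swe_{C^\perp}(\xv)$. The heart of the argument is to show that the same substitution transforms the right-hand side into $\frac{1}{|C|}\swe_C(\mathbf{S}\xv)$. For this, given $r\approx a_i$, write $r=ua_i$ with $u\in R^\times$; using commutativity of $R$,
\[
\widehat{X}_r=\sum_{j=0}^{t}\Bigl(\sum_{s\,\approx\,a_j}\chi(rs)\Bigr)x_j=\sum_{j=0}^{t}\Bigl(\sum_{s\,\approx\,a_j}\chi(a_i\cdot us)\Bigr)x_j,
\]
and since $s\mapsto us$ permutes every $\approx$-equivalence class, the inner sum reindexes to $\sum_{s'\approx a_j}\chi(a_i s')=s_{ij}$. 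Hence $\widehat{X}_r=(\mathbf{S}\xv)_i$ depends only on the class of $r$, and substituting into the right-hand side of the cwe identity recovers $\frac{1}{|C|}\swe_C(\mathbf{S}\xv)$.

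The main obstacle is precisely this class-function property: after the symmetrizing substitution, one has to check that $\widehat{X}_r$ depends only on the $\approx$-class of $r$, and this is where commutativity of $R$ enters in an essential way, since it lets us move the unit $u$ past $a_i$ so that it acts only on the summation variable $s$. Everything else is bookkeeping: identifying the transformed variables as the entries of the vector $\mathbf{S}\xv$ and collecting exponents to recognize $\swe_C$. I would also briefly verify that the cwe MacWilliams identity itself is accessible without appealing back to Theorem~\ref{thm:MW-Wood}, so that the argument is not circular; the standard Fourier derivation over a Frobenius ring suffices for this.
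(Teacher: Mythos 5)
Theorem~\ref{thm:MW-Wood} is not proved in the paper; it is cited verbatim from Wood \cite[Theorem 8.4]{Wood1999}, and the introduction explicitly says Wood proved it ``via a character-theoretical approach.'' Your argument is correct and is essentially that classical route: you invoke the complete-weight-enumerator MacWilliams identity (a consequence of Fourier analysis over a Frobenius ring), specialize the variables along $\approx$-classes, and verify the class-function property of $\widehat{X}_r$ by writing $r=ua_i$, using commutativity to move $u$ onto the summation variable, and observing that $s\mapsto us$ permutes each $\approx$-class since $u$ is a unit. All the steps check out and there is no circularity, since the cwe identity is proved independently.

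The contrast worth noting is not between your proof and any proof in the paper of Theorem~\ref{thm:MW-Wood} (there is none), but between your proof and the paper's independent, character-free derivation in Section~\ref{sec:main}. There, $\swe_C$ is rewritten as a sum over submultisets of $t\cdot[n]$ (Equation~(\ref{eq:SWE-multiset})), the quantities $B^{\mathcal{I}}_{C^\perp}(X)$ and $B^{\mathcal{J}}_C(X)$ are identified as kernel sizes of explicit module homomorphisms $\sigma_X$ and $\tau_X$ (Lemma~\ref{lem:id-SWE}), and the transformation matrix emerges as $\mathbf{QDA}^{-1}$ built from the poset of principal ideals. Corollary~\ref{cor:GC-id} then recovers $\mathbf{S}=\mathbf{QDA}^{-1}$ as a by-product. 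Your approach is shorter and relies on the generating character; the paper's approach requires no character at all and exposes the combinatorial structure of $\mathbf{S}$, which is precisely the paper's contribution.
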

\noindent For the exact definition of a generating character, see \cite{Wood1999}. For a constructive method for finding the generating character of any finite commutative Frobenius ring, see \cite{DoSaSz19}. 

\section{MacWilliams identity for the symmetrized weight enumerator}\label{sec:main}
In this section, we will prove our main result, namely a MacWilliams identity for the symmetrized weight enumerator that does not involve generating characters. Our main approach is to look at the symmetrized weight enumerator as a sum over submultisets. For~$t$ as defined in Subsection~\ref{subsection:swe}, let $t\cdot[n]$ denote the multiset with underlying set~$[n]$ in which each of $1,\dots, n$ appears exactly $t$ times. For each multiset $X\subseteq t\cdot[n]$, let $m_X(\ell)$ denote the \emph{multiplicity} of~$\ell\in [n]$ in $X$, namely the number of occurrences of $\ell$ in the multiset~$X$. For each integer $i\in\{0,1,\dots,t\}$,~define the set
\begin{align*}
    M_i(X)&:=\{\ell\in [n]\;\colon\;m_X(\ell)=i\}\,.
\end{align*}
Now let $C$ be a linear code of length $n$ over a finite commutative Frobenius ring~$R$. Since $S_0(\cv),S_1(\cv),\dots,S_t(\cv)$  are pairwise disjoint sets whose union is $[n]$ by definition~(\ref{eq:def-Si}), we are able to express $\swe_C$ in~(\ref{eq:SWE}) as a sum over submultisets of $t\cdot[n]$. In particular,
\begin{equation}\label{eq:SWE-multiset}
    \swe_C(\xv)=\sum_{X\subseteq t\cdot[n]} A_C(X) x_0^{|M_0(X)|}x_1^{|M_1(X)|}\cdots x_t^{|M_t(X)|}\,,
\end{equation}
where for any multiset $X\subseteq t\cdot[n]$,
\begin{equation}\label{eq:AC}
    A_C(X):=|\{\cv\in C\;\colon\; S_i(\cv)=M_i(X) \text{ for all }i=0,1,\dots,t\}|\,.
\end{equation}
It is worth noting that the condition $S_i(\cv)=M_i(X)$ for all $i=0,1,\dots,t$ means that the multiset $X$ contains the element $\ell\in[n]$ exactly $i$ times when the $\ell$-th coordinate of the codeword $\cv=(c_1,\dots,c_n)$ generates $a_i R$, i.e., $c_\ell R=a_i R$.  

\begin{rem}
    While our main approach is to look at the symmetrized weight enumerator as a sum over multisets, similar approaches can be applied to other combinatorial objects that are equivalent to multisets, for example chains of subsets.
\end{rem}
For each multiset $X\subseteq t\cdot[n]$ and sets $F_0,F_1,\dots,F_t\subseteq\{0,1,\dots,t\}$, let $\mathcal{F}:=(F_0,F_1,\dots,F_t)$ and define
\begin{equation}\label{eq:BC}
    B^{\mathcal{F}}_C(X):=\Big|\Big\{\cv\in C\;\colon\; S_i(\cv)\subseteq \bigcup_{j\in F_i}M_j(X) \text{ for all }i=0,1,\dots,t\Big\}\Big|\,.
\end{equation}

\noindent We can replace $A_C$ in (\ref{eq:SWE-multiset}) by $B^{\mathcal{F}}_{C}$ and obtain a new expression in terms of $\swe_C$.
\begin{theorem}\label{thm:id-SWE}
    Let $C$ be a linear code of length $n$ over a finite commutative Frobenius ring~$R$, and let $\mathcal{F}:=(F_0,F_1,\dots,F_t)$, where $F_0,F_1,\dots,F_t$ are non-empty subsets of $\{0,1,\dots,t\}$. Then
    \begin{equation}\label{eq:id-SWE}
        \swe_C\big(\mathbf{P}^\mathcal{F}\yv\big)=\sum_{Y\subseteq t\cdot[n]} B^{\mathcal{F}}_{C}(Y) y_0^{|M_0(Y)|}y_1^{|M_1(Y)|}\cdots y_t^{|M_t(Y)|}\,,
    \end{equation}
    where $\mathbf{P}^\mathcal{F}=(p_{ij}^\mathcal{F})$ is the $(t+1)\times (t+1)$ matrix whose rows and columns are indexed by $\{0,1,\dots,t\}$ with 
    \[p^\mathcal{F}_{ij}:=\begin{cases}
    1,&\text{ if }j\in F_i\,;\\
    0,&\text{ otherwise.}
    \end{cases}\] 
\end{theorem}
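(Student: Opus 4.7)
The plan is to start from the definition of $\swe_C$ as a sum over codewords, substitute in the linear combinations that the matrix $\mathbf{P}^{\mathcal{F}}$ prescribes, and then reorganize the resulting expression so that the multiset structure appears naturally. Concretely, reading off the matrix, the substitution $\xv = \mathbf{P}^{\mathcal{F}}\yv$ amounts to $x_i = \sum_{j \in F_i} y_j$ for every $i \in \{0,1,\dots,t\}$. Plugging this into the definition of $\swe_C$ in~(\ref{eq:SWE}) gives
\[
\swe_C(\mathbf{P}^{\mathcal{F}}\yv) = \sum_{\cv \in C} \prod_{i=0}^{t} \Big(\sum_{j \in F_i} y_j\Big)^{\swc_i(\cv)}.
\]

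Next I would expand each factor via the distributive law. Since $\swc_i(\cv) = |S_i(\cv)|$, each factor $\big(\sum_{j \in F_i} y_j\big)^{|S_i(\cv)|}$ equals $\sum_{f_i : S_i(\cv) \to F_i} \prod_{\ell \in S_i(\cv)} y_{f_i(\ell)}$. Because $S_0(\cv),\dots,S_t(\cv)$ partition $[n]$ by~(\ref{eq:def-Si}), the product of these $t+1$ sums over $i$ collapses to a single sum over functions $f : [n] \to \{0,1,\dots,t\}$ subject to the constraint that $f(\ell) \in F_i$ whenever $\ell \in S_i(\cv)$. The resulting monomial is $\prod_{\ell \in [n]} y_{f(\ell)}$. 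This reorganization is the main conceptual step, but it is really just bookkeeping once the right viewpoint is adopted.

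I would then use the obvious bijection between functions $f : [n] \to \{0,1,\dots,t\}$ and submultisets $Y \subseteq t\cdot[n]$: associate to $f$ the multiset $Y$ in which $\ell$ has multiplicity $f(\ell)$, so that $M_j(Y) = f^{-1}(j)$ for each $j$. Under this bijection, $\prod_{\ell \in [n]} y_{f(\ell)} = \prod_{j=0}^{t} y_j^{|M_j(Y)|}$. Moreover, the constraint that $f(\ell) \in F_i$ whenever $\ell \in S_i(\cv)$ translates precisely to the requirement that $S_i(\cv) \subseteq \bigcup_{j \in F_i} M_j(Y)$ for every $i$, which is exactly the condition defining $B^{\mathcal{F}}_C(Y)$ in~(\ref{eq:BC}).

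Putting everything together, I would interchange the order of summation so that the outer sum ranges over $Y \subseteq t\cdot[n]$ and the inner sum counts the codewords $\cv \in C$ satisfying the containment condition for this $Y$. The inner count is $B^{\mathcal{F}}_C(Y)$, and the monomial factors $\prod_{j} y_j^{|M_j(Y)|}$ depend only on $Y$, which yields the right-hand side of~(\ref{eq:id-SWE}). The only place requiring care is the translation between the function-level constraint and the multiset-level subset condition; once that equivalence is pinned down explicitly, the rest is routine manipulation.
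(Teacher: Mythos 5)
Your proof is correct and is essentially the paper's argument run in the forward direction: you expand the left-hand side $\swe_C(\mathbf{P}^{\mathcal{F}}\yv)$ into a sum over functions $f\colon[n]\to\{0,1,\dots,t\}$ and then reinterpret those functions as submultisets $Y\subseteq t\cdot[n]$, whereas the paper starts from the right-hand side, inserts $B^{\mathcal{F}}_C(Y)=\sum_{X\leq Y}A_C(X)$, and collapses the inner sum over $Y$ via the same multinomial-type expansion. Your function-to-multiset bijection and the constraint translation $f(\ell)\in F_i \Leftrightarrow S_i(\cv)\subseteq\bigcup_{j\in F_i}M_j(Y)$ are equivalent to the paper's ordered-partition bookkeeping with the sets $W_{ij}$, so the combinatorial core is identical.
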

\begin{proof}
Throughout the proof, define for any set $T\subseteq [n]$ and each positive integer $m$,
\[\mathcal{P}_m(T):=\Big\{(T_1,\dots,T_m)\;\colon\;\bigcup_{i=1}^m T_i = T\text{ and }T_i\cap T_j=\emptyset\text{ for all }i\neq j\Big\}\,.\]
Then the following identity holds:
\begin{equation}\label{eq:id-part}
    \sum_{(T_1,\dots,T_m)\in\mathcal{P}_m(T)} u_1^{|T_1|}\cdots u_m^{|T_m|}=(u_1+\cdots+u_m)^{|T|}\,.
\end{equation}
For any non-empty sets of integers $V$ and $T\subseteq [n]$, let $(T_j\;\colon\;j\in V)$ denote a $|V|$-tuple of the sets $T_j\subseteq T$ indexed by the elements of $V$ with respect to the usual ordering of integers. Then~(\ref{eq:id-part}) can be generalized to
\begin{equation}\label{eq:id-part-set}
    \sum_{(T_j\,\colon\,j\,\in\, V)\in\mathcal{P}_{|V|}(T)}\Big(\prod_{j\in V} u_j^{|T_j|}\Big)=\Big(\sum_{j\in V} u_j\Big)^{|T|}.
\end{equation}
Now consider a binary relation $\leq$ over $t\cdot[n]$, where for any $X, Y\subseteq t\cdot[n]$,
\[X\leq Y\;\text{ if }\; M_i(X)\subseteq \bigcup_{j\in F_i}M_j(Y)\;\text{ for all }i=0,1,\dots,t.\]
Note that $\leq$ is not necessarily reflexive, symmetric, anti-symmetric or transitive. However,
\begin{equation*}\label{eq:proof-id}
  B^{\mathcal{F}}_{C}(Y)=\sum_{X\leq Y}A_C(X)\,.  
\end{equation*}
This fact allows us to simplify the expression in the right-hand side of (\ref{eq:id-SWE}) in terms of $A_C$. Observe that
\begin{align}\label{eq:proof1}
    \sum_{Y\subseteq t\cdot[n]} B^{\mathcal{F}}_{C}(Y)\prod_{j=0}^t y_j^{|M_j(Y)|}&=\hspace*{-.5mm}\sum_{Y\subseteq t\cdot[n]} \sum_{X\leq Y}A_C(X)\prod_{j=0}^t y_j^{|M_j(Y)|}\notag\\&=\hspace*{-.5mm}\sum_{X\subseteq t\cdot[n]}\hspace*{-1mm}A_C(X)\hspace*{-1mm}\sum_{\substack{Y\subseteq t\cdot[n]\colon\\X\leq Y}}\prod_{j=0}^t y_j^{|M_j(Y)|}.
\end{align}
To simplify (\ref{eq:proof1}) further, define for any $i,j\in\{0,1,\dots,t\}$,
\[W_{ij}:=M_i(X)\cap M_j(Y)\,.\]
Since $X\leq Y$, $W_{ij}=\emptyset$ for all $j\not\in F_i$ and thus $(W_{ij}\,\colon\,j\in F_i)\in \mathcal{P}_{|F_i|}(M_i(X))$. Moreover,
\begin{equation}\label{eq:proof2}
    |M_j(Y)|=\sum_{\substack{i\in\{0,1,\dots,t\}\colon\\j\in F_i}}\hspace*{-2mm}|W_{ij}|\,.
\end{equation}
Therefore, from (\ref{eq:proof2}),
\begingroup
	\allowdisplaybreaks
\begin{align*}
    &\sum_{\substack{Y\subseteq t\cdot[n]\colon\\X\leq Y}}\prod_{j=0}^t y_j^{|M_j(Y)|}\\
    &=\sum_{\substack{(\hspace*{.3mm}W_{\hspace*{.25mm}0\hspace*{.15mm}j\hspace*{.25mm}}\,\colon\,j\,\in\, F_0\hspace*{.25mm})\\\in \hspace*{.2mm}\mathcal{P}_{|\hspace*{-.2mm}F_0\hspace*{-.2mm}|}\hspace*{-.25mm}(\hspace*{-.1mm}M_0(X)\hspace*{-.1mm})}}
    \;\sum_{\substack{(\hspace*{.3mm}W_{\hspace*{.25mm}1\hspace*{.1mm}j\hspace*{.25mm}}\,\colon\,j\,\in\, F_1\hspace*{.25mm})\\\in \hspace*{.2mm}\mathcal{P}_{|\hspace*{-.2mm}F_1\hspace*{-.2mm}|}\hspace*{-.25mm}(\hspace*{-.1mm}M_1(X)\hspace*{-.1mm})}}
    \cdots
    \;\sum_{\substack{(\hspace*{.3mm}W_{\hspace*{.25mm}t\hspace*{.1mm}j\hspace*{.25mm}}\,\colon\,j\,\in\, F_t\hspace*{.25mm})\\\in \hspace*{.2mm}\mathcal{P}_{|\hspace*{-.2mm}F_t\hspace*{-.2mm}|}\hspace*{-.25mm}(\hspace*{-.1mm}M_t(X)\hspace*{-.1mm})}}\Bigg(\prod_{i=0}^t\prod_{j\in F_i} y_j^{|W_{ij}|}\Bigg)\\
    &=\sum_{\substack{(\hspace*{.3mm}W_{\hspace*{.25mm}0\hspace*{.15mm}j\hspace*{.25mm}}\,\colon\,j\,\in\, F_0\hspace*{.25mm})\\\in \hspace*{.2mm}\mathcal{P}_{|\hspace*{-.2mm}F_0\hspace*{-.2mm}|}\hspace*{-.25mm}(\hspace*{-.1mm}M_0(X)\hspace*{-.1mm})}}\,\prod_{j\in F_0} y_j^{|W_{0j}|}\Bigg(
    \;\sum_{\substack{(\hspace*{.3mm}W_{\hspace*{.25mm}1\hspace*{.1mm}j\hspace*{.25mm}}\,\colon\,j\,\in\, F_1\hspace*{.25mm})\\\in \hspace*{.2mm}\mathcal{P}_{|\hspace*{-.2mm}F_1\hspace*{-.2mm}|}\hspace*{-.25mm}(\hspace*{-.1mm}M_1(X)\hspace*{-.1mm})}}\,\prod_{j\in F_1} y_j^{|W_{1j}|}\Bigg(
    \cdots\Bigg(
    \;\sum_{\substack{(\hspace*{.3mm}W_{\hspace*{.25mm}t\hspace*{.1mm}j\hspace*{.25mm}}\,\colon\,j\,\in\, F_t\hspace*{.25mm})\\\in \hspace*{.2mm}\mathcal{P}_{|\hspace*{-.2mm}F_t\hspace*{-.2mm}|}\hspace*{-.25mm}(\hspace*{-.1mm}M_t(X)\hspace*{-.1mm})}}\,\prod_{j\in F_t} y_j^{|W_{tj}|}\Bigg)\cdots\Bigg)\Bigg)\\
    &=\Bigg(\sum_{\substack{(\hspace*{.3mm}W_{\hspace*{.25mm}0\hspace*{.15mm}j\hspace*{.25mm}}\,\colon\,j\,\in\, F_0\hspace*{.25mm})\\\in \hspace*{.2mm}\mathcal{P}_{|\hspace*{-.2mm}F_0\hspace*{-.2mm}|}\hspace*{-.25mm}(\hspace*{-.1mm}M_0(X)\hspace*{-.1mm})}}\,\prod_{j\in F_0} y_j^{|W_{0j}|}\Bigg)
    \;\Bigg(\sum_{\substack{(\hspace*{.3mm}W_{\hspace*{.25mm}1\hspace*{.1mm}j\hspace*{.25mm}}\,\colon\,j\,\in\, F_1\hspace*{.25mm})\\\in \hspace*{.2mm}\mathcal{P}_{|\hspace*{-.2mm}F_1\hspace*{-.2mm}|}\hspace*{-.25mm}(\hspace*{-.1mm}M_1(X)\hspace*{-.1mm})}}\,\prod_{j\in F_1} y_j^{|W_{1j}|}\Bigg)
    \cdots\Bigg(
    \;\sum_{\substack{(\hspace*{.3mm}W_{\hspace*{.25mm}t\hspace*{.1mm}j\hspace*{.25mm}}\,\colon\,j\,\in\, F_t\hspace*{.25mm})\\\in \hspace*{.2mm}\mathcal{P}_{|\hspace*{-.2mm}F_t\hspace*{-.2mm}|}\hspace*{-.25mm}(\hspace*{-.1mm}M_t(X)\hspace*{-.1mm})}}\,\prod_{j\in F_t} y_j^{|W_{tj}|}\Bigg)\\
    &=\prod_{i=0}^t\Bigg(\sum_{\substack{(\hspace*{.3mm}W_{\hspace*{.25mm}i\hspace*{.15mm}j\hspace*{.25mm}}\,\colon\,j\,\in\, F_i\hspace*{.25mm})\\\in \hspace*{.2mm}\mathcal{P}_{|\hspace*{-.2mm}F_i\hspace*{-.2mm}|}\hspace*{-.25mm}(\hspace*{-.1mm}M_i(X)\hspace*{-.1mm})}}\,\prod_{j\in F_i} y_i^{|W_{ij}|}\Bigg).
\end{align*}
\endgroup
We can simpify the last expression above using (\ref{eq:id-part-set}) to obtain
\[
    \sum_{\substack{Y\subseteq t\cdot[n]\colon\\X\leq Y}}\prod_{j=0}^t y_j^{|M_j(Y)|}=\prod_{i=0}^t\Big(\sum_{j\in F_i}y_j\Big)^{|M_i(X)|}=\prod_{i=0}^t\Big(\sum_{j=0}^t p^\mathcal{F}_{ij} y_j\Big)^{|M_i(X)|}.\]
Substituting the equation above into (\ref{eq:proof1}) and comparing with the form of $\swe_C$ in (\ref{eq:SWE-multiset}) completes the proof.
\end{proof}
\noindent The identity in (\ref{eq:id-SWE}) shows that it is possible to relate $\swe_{C^\perp}$ and $\swe_C$ by finding suitable families $\mathcal{I}:=(I_0,I_1,\dots,I_t)$ and $\mathcal{J}:=(J_0,J_1,\dots,J_t)$ such that $B^{\mathcal{I}}_{C^\perp}$ can be expressed in terms of $B^{\mathcal{J}}_{C}$. The following lemma shows that such choices of $\mathcal{I}$ and $\mathcal{J}$ exist.

\begin{lem}\label{lem:id-SWE}
    Let $C$ be a linear code of length $n$ over a finite commutative Frobenius ring~$R$, and let $\{a_0,a_1,\dots,a_t\}$ be the set of representatives described in Subsection~\ref{subsection:swe}. Let $\mathcal{I}:=(I_0,I_1,\dots,I_t)$ and $\mathcal{J}:=(J_0,J_1,\dots,J_t)$ be tuples of subsets of $\{0,1,\dots,t\}$, where for each $i\in\{0,1,\dots,t\}$,
    \begin{align*}
    I_i&:=\{j\in\{0,1,\dots,t\}\;\colon\;a_j a_k\neq 0\text{ for all }k\text{ such that }a_ia_k\neq 0\}\,;\\
    J_i&:=\{j\in\{0,1,\dots,t\}\;\colon\;a_k R\not\subseteq a_j R\text{ for all }k\text{ such that }a_ia_k\neq 0\}\,.
    \end{align*}
    Then for any multiset $X\subseteq t\cdot[n]$,
    \[B^{\mathcal{I}}_{C^\perp}(X)=\bigg(\frac{1}{|C|}\prod_{i=0}^t|a_iR|^{|M_i(X)|}\bigg)B^{\mathcal{J}}_C(X)\,.\]
\end{lem}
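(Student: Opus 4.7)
The plan is to first simplify the combinatorial descriptions of $I_i$ and $J_i$ into clean ideal-containment statements, then translate the conditions defining $B^{\mathcal{I}}_{C^\perp}(X)$ and $B^{\mathcal{J}}_C(X)$ into memberships in a pair of mutually dual product submodules of $R^n$, and finally close the argument with a direct counting computation using Lemmas~\ref{lem:Wood} and~\ref{lem:AA}.

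First, I would establish
\[I_i = \{j\in\{0,\dots,t\} : a_iR \subseteq a_jR\}\quad\text{and}\quad J_i = \{j\in\{0,\dots,t\} : a_ia_j = 0\}.\]
For $I_i$: by Corollary~\ref{cor:dual}, $a_ja_k=0$ is equivalent to $a_k\in (a_jR)^\perp$, so $j\in I_i$ says $(a_jR)^\perp\cap\{a_0,\dots,a_t\}\subseteq (a_iR)^\perp\cap\{a_0,\dots,a_t\}$. Since $(a_jR)^\perp$ and $(a_iR)^\perp$ are ideals, hence unions of $\approx$-classes, and $\{a_0,\dots,a_t\}$ is a full set of $\approx$-representatives, this is equivalent to $(a_jR)^\perp\subseteq (a_iR)^\perp$, which by Corollary~\ref{cor:dual} becomes $a_iR\subseteq a_jR$. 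For $J_i$: the case $k=j$ in the definition forces $a_ia_j=0$, and conversely, if $a_ia_j=0$ then any $a_k\in a_jR$ satisfies $a_ia_k=0$.

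Next, using these simplified descriptions, the condition $S_i(\vv)\subseteq\bigcup_{j\in I_i}M_j(X)$ for all $i$ becomes the coordinatewise condition $v_\ell\in a_{m_X(\ell)}R$ for every $\ell\in[n]$; similarly the $J_i$-condition becomes $c_\ell\in (a_{m_X(\ell)}R)^\perp$ for every $\ell$. Setting
\[M_X := \prod_{\ell=1}^n a_{m_X(\ell)}R \quad\text{and}\quad N_X := \prod_{\ell=1}^n (a_{m_X(\ell)}R)^\perp,\]
we therefore obtain $B^{\mathcal{I}}_{C^\perp}(X)=|C^\perp\cap M_X|$ and $B^{\mathcal{J}}_C(X)=|C\cap N_X|$. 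A direct coordinatewise verification gives $M_X^\perp = N_X$ inside $R^n$, and grouping positions by the value of $m_X(\ell)$ yields $|M_X|=\prod_{i=0}^t |a_iR|^{|M_i(X)|}$.

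Finally, I would finish by a duality computation. By Lemma~\ref{lem:AA}, $N_X^\perp = M_X$, and $(C+N_X)^\perp = C^\perp\cap N_X^\perp = C^\perp\cap M_X$. Applying Lemma~\ref{lem:Wood} to $C+N_X$ gives $|C+N_X|\cdot|C^\perp\cap M_X|=|R|^n$, and combining this with the elementary submodule identity $|C+N_X|=|C|\,|N_X|/|C\cap N_X|$ and $|N_X|=|R|^n/|M_X|$ (Lemma~\ref{lem:Wood} applied to $M_X$) produces, after substitution, exactly the claimed formula. The main obstacle is the first step: recognizing that the combinatorial sets $I_i$ and $J_i$ encode, via Corollary~\ref{cor:dual}, precisely the principal-ideal containment and annihilation data needed to realize $B^{\mathcal{I}}_{C^\perp}(X)$ and $B^{\mathcal{J}}_C(X)$ as counts in mutually dual product submodules; once that is in place, the counting argument is essentially forced by the Frobenius property.
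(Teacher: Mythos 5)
Your proof is correct, and it follows a genuinely different route from the paper's. The paper works through a generator matrix $\mathbf{G}$ of $C$ and constructs two module homomorphisms, $\sigma_X\colon \bigoplus_i (a_iR)^{M_i(X)}\to R^m$ and $\tau_X\colon C\to R^n$, shows that $\ker(\sigma_X)$ and $\ker(\tau_X)$ are exactly the sets counted by $B^{\mathcal{I}}_{C^\perp}(X)$ and $B^{\mathcal{J}}_C(X)$, deduces $|\mathrm{im}(\sigma_X)|=|\mathrm{im}(\tau_X)|$ from Corollary~\ref{cor:span} (essentially: row span and column span of the scaled matrix $(\alpha_\ell g_{j\ell})$ have equal size), and finishes with the Module Isomorphism Theorem. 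You instead avoid the generator matrix entirely: after reformulating $I_i$ and $J_i$ as $\{j: a_iR\subseteq a_jR\}$ and $\{j: a_ia_j=0\}$ (reformulations the paper obtains later, in Proposition~\ref{prop:A} and Theorem~\ref{thm:main}, but with the same Corollary~\ref{cor:dual} argument), you recognize $B^{\mathcal{I}}_{C^\perp}(X)=|C^\perp\cap M_X|$ and $B^{\mathcal{J}}_C(X)=|C\cap N_X|$ for the mutually dual product submodules $M_X$ and $N_X=M_X^\perp$, and close with the Frobenius cardinality identity applied to $C+N_X$ together with $|C+N_X|=|C||N_X|/|C\cap N_X|$ and $|N_X|=|R|^n/|M_X|$. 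Your route is more intrinsic and self-contained (it needs only Lemmas~\ref{lem:Wood} and~\ref{lem:AA} and standard lattice counting, not Corollary~\ref{cor:span}), and it makes the duality $M_X^\perp=N_X$ explicit; the paper's version instead highlights the role of the generator matrix and the pair of "row/column" homomorphisms, which perhaps foreshadows the $\lambda$-tuple generalization in Section~\ref{sec:generalizations}. Both are valid, and the counting at the end is the same Module-Isomorphism-Theorem bookkeeping in different clothing.
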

\begin{proof}
Consider the fact that for any $r\in R$ and $\cv\in R^n$,
\begin{equation}\label{eq:supp}
    \supp(r\cv)=\bigcup_{\substack{j\in\{0,1,\dots,t\}\colon\\r a_j\neq 0}}\hspace*{-2mm}\!\! S_j(\cv)\,.
\end{equation}
From (\ref{eq:supp}), we can deduce that the following conditions $\mathrm{(A1)}$ and $\mathrm{(A2)}$ are equivalent:
\begin{itemize}
    \item[$\mathrm{(A1)}$] For any $k\in\{0,1,\dots,t\}$, $\supp(a_k\cv)\subseteq \displaystyle\bigcup_{\substack{j\in\{0,1,\dots,t\}\colon\\a_j a_k\neq 0}}\hspace*{-2mm}\!\!  M_j(X)$\,;
    \item[$\mathrm{(A2)}$] For any $i\in\{0,1,\dots,t\}$, $S_i(\cv)\subseteq \displaystyle\bigcup_{j\in I_i}M_j(X)$\,.
\end{itemize}
Thus for any multiset $X\subseteq t\cdot[n]$,
\begin{equation}\label{eq:id-BCI}
    B^{\mathcal{I}}_{C^\perp}(X)=\Big|\Big\{\cv\in C^\perp\;\colon\;\supp(a_k\cv)\subseteq \displaystyle\bigcup_{\substack{j\in\{0,1,\dots,t\}\colon\\a_j a_k\neq 0}}\hspace*{-2mm}\!\!  M_j(X)\,\text{ for all }k=0,1,\dots,t\Big\}\Big|
\end{equation}
from (\ref{eq:BC}), where the code is $C^\perp$ and $\mathcal{F}=\mathcal{I}$.  

Now, we will show that the value in (\ref{eq:id-BCI}) can be obtained from the cardinality of the kernel of a particular module homomorphism. Consider the $R$-module $\bigoplus_{i=0}^t (a_i R)^{M_i(X)}$. Here, $(a_i R)^{M_i(X)}\cong(a_iR)^{|M_i(X)|}$ is the set of all $|M_i(X)|$-tuples over $a_i R$ indexed by the elements of $M_i(X)$. Note that $\bigoplus_{i=0}^t (a_i R)^{M_i(X)}$ is a submodule of $R^n$. Let $\mathbf{G}=(\gv_1\dots\gv_n)\in R^{m\times n}$ be a generator matrix of $C$. For each multiset $X\subseteq t\cdot[n]$, consider the module homomorphism 
\begin{eqnarray*}
  \sigma_X: & \displaystyle\bigoplus_{i=0}^t (a_i R)^{M_i(X)} &\rightarrow\;\;\quad R^{m}\\
          & (c_1,\dots,c_n)                                   &\mapsto\;\; \sum_{\ell=1}^n c_\ell\gv_\ell\,.
\end{eqnarray*}
Note that $\ker(\sigma_X)\subseteq C^\perp$ by definition. Moreover, $\cv\in \bigoplus_{i=0}^t (a_i R)^{M_i(X)}$ implies that for any $k\in\{0,1,\dots,t\}$ and any $j\in\{0,1,\dots,t\}$ such that $a_j a_k=0$, $a_kc_\ell=0$ for all $\ell\in M_j(X)$. Thus, (A1) holds whenever $\cv\in\ker(\sigma_X)$. On the other hand, suppose that (A1) holds for some $\cv\in C^\perp$. For any $j\in\{0,1,\dots,t\}$, note that if $a_j a_k=0$, then $a_k c_\ell=0$ for all $\ell\in M_j(X)$ by (A1). In other words, $(a_j R)^\perp\subseteq (c_\ell R)^\perp$ for all $\ell\in M_j(X)$, and this is equivalent to $c_\ell\in a_j R$ for all $\ell\in M_j(X)$ by Corollary~\ref{cor:dual}. We conclude that $\cv\in\bigoplus_{i=0}^t (a_i R)^{M_i(X)}$. Moreover, $\sigma_X(\cv)=0$ since $\cv\in C^\perp$. Therefore,
\begin{equation}\label{eq:Ker-sigma}
    \ker(\sigma_X)=\Big\{\cv\in C^\perp\;\colon\;\supp(a_k\cv)\subseteq\bigcup_{\substack{j\in\{0,1,\dots,t\}\colon\\a_j a_k\neq 0}}\hspace*{-2mm}\!\! M_j(X)\,\text{ for all }k=0,1,\dots,t\Big\}\,.
\end{equation}
The equation above implies that $B^{\mathcal{I}}_{C^\perp}(X)=|\ker(\sigma_X)|$ by (\ref{eq:id-BCI}).

Similarly, we will show that there exists a module homomorphism $\tau_X$ such that $B^{\mathcal{J}}_{C}(X)=|\ker(\tau_X)|$. From (\ref{eq:supp}), the following conditions $\mathrm{(B1)}$ and $\mathrm{(B2)}$ are equivalent: 
\begin{itemize}
    \item[$\mathrm{(B1)}$] For any $k\in\{0,1,\dots,t\}$, $\supp(a_k\cv)\subseteq\displaystyle\bigcup_{\substack{j\in\{0,1,\dots,t\}\colon\\a_k R\not\subseteq a_j R}} \hspace*{-2mm}\!\!  M_j(X)$\,;
    \item[$\mathrm{(B2)}$] For any $i\in\{0,1,\dots,t\}$, $S_i(\cv)\subseteq \displaystyle\bigcup_{j\in J_i} M_j(X)$\,.
\end{itemize}
Then for any multiset $X\subseteq t\cdot[n]$,
\begin{equation}\label{eq:id-BCJ}
    B^{\mathcal{J}}_{C}(X)=\Big|\Big\{\cv\in C\;\colon\;\supp(a_k\cv)\subseteq\displaystyle\bigcup_{\substack{j\in\{0,1,\dots,t\}\colon\\a_k R\not\subseteq a_j R}} \hspace*{-2mm}\!\!  M_j(X)\Big\}\Big|
\end{equation}
from (\ref{eq:BC}) for $\mathcal{F}=\mathcal{J}$. Consider another module homomorphism
\begin{eqnarray*}
  \tau_X: & C &\rightarrow\quad\;\; R^{n}\\
          & (c_1,\dots,c_n)                                   &\mapsto\;\; (\alpha_1 c_1,\dots,\alpha_nc_n)\,,
\end{eqnarray*}
where $\alpha_\ell=a_i$ if $\ell\in M_i(X)$. For any $\cv\in \ker(\tau_X)$ and any $j\in\{0,1,\dots,t\}$, note that $a_j c_\ell=\alpha_\ell c_\ell=0$ for all $\ell\in M_j(X)$. Now fix $k\in\{0,1,\dots,t\}$. For each $j\in\{0,1,\dots,t\}$ such that $a_k R\subseteq a_j R$, we can deduce that $a_k c_\ell=0$ for all $\ell\in M_j(X)$ since $a_j c_\ell=0$ for all $\ell\in M_j(X)$ and thus condition (B1) holds. On the other hand, if $\cv\in C$ satisfies the condition (B1), then fix any $k\in\{0,1,\dots, t\}$ and note that $\alpha_\ell c_\ell=a_k c_\ell=0$ for any $\ell\in M_k(X)$. Since $k$ is arbitrary, $\alpha_\ell c_\ell=0$ for all $\ell\in[n]$. Therefore,   
\begin{equation}\label{eq:Ker-tau}
    \ker(\tau_X)=\Big\{\cv\in C\;\colon\;\supp(a_k\cv)\subseteq\bigcup_{\substack{j\in\{0,1,\dots,t\}\colon\\a_k R\not\subseteq a_j R}}\hspace*{-2mm}\!\! M_j(X)\,\text{ for all }k=0,1,\dots,t\Big\}\,.
\end{equation}

From Corollary \ref{cor:span}, we deduce that $|\mathrm{im}(\sigma_X)|=|\mathrm{im}(\tau_X)|$ for any multiset $X\subseteq t\cdot[n]$. By applying the Module Isomorphism Theorem to $\sigma_X$ and $\tau_X$,
\[\frac{\displaystyle\prod_{i=0}^t|a_iR|^{|M_i(X)|}}{|\ker(\sigma_X)|}=\frac{\Big|\displaystyle\bigoplus_{i=0}^t (a_i R)^{M_i(X)}\Big|}{|\ker(\sigma_X)|}=|\mathrm{im}(\sigma_X)|=|\mathrm{im}(\tau_X)|=\frac{|C|}{|\ker(\tau_X)|}\,.\]
Apply Equations (\ref{eq:Ker-sigma}) and (\ref{eq:Ker-tau}) together with Equations (\ref{eq:id-BCI}) and (\ref{eq:id-BCJ}) to complete the proof. 
\end{proof}

For $\mathcal{F}=\mathcal{I}$, the matrix $\mathbf{P}^\mathcal{I}$ obtained from Theorem~\ref{thm:id-SWE} has a surprising and natural combinatorial interpretation as follows. 
\begin{prop}\label{prop:A}
Let $C$ be a linear code of length $n$ over a finite commutative Frobenius ring~$R$, and let $\mathcal{I}:=(I_0,I_1,\dots,I_t)$ be a tuple of subsets of $\{0,1,\dots,t\}$ as defined in Lemma~\ref{lem:id-SWE}. Then the matrix $\mathbf{A}:=\mathbf{P}^{\mathcal{I}}$ with respect to Theorem~\ref{thm:id-SWE} is the adjacency matrix of the poset of principal ideals of $R$ under set inclusion. Consequently, $\mathbf{A}$ is invertible. 
\end{prop}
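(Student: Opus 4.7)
The plan is to show directly that the entry $p^{\mathcal{I}}_{ij}$ equals $1$ precisely when $a_iR\subseteq a_jR$, and then read off invertibility from the poset structure.

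First, I would unpack the definition: $p^{\mathcal{I}}_{ij}=1$ iff $j\in I_i$, iff $a_ja_k\neq 0$ whenever $a_ia_k\neq 0$ (for $k\in\{0,1,\dots,t\}$). The goal is to rewrite this as $a_iR\subseteq a_jR$. The key observation is that every element $r\in R$ is a unit multiple of exactly one $a_k$ (by the definition of the representatives $a_0,\dots,a_t$), and multiplication by a unit does not change whether a product is zero. Therefore the condition ``$a_ja_k\neq 0$ whenever $a_ia_k\neq 0$, for every $k$'' is equivalent to ``$a_jr\neq 0$ whenever $a_ir\neq 0$, for every $r\in R$,'' which in turn is equivalent to $\{r\in R\colon a_jr=0\}\subseteq\{r\in R\colon a_ir=0\}$. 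By Corollary~\ref{cor:dual} these two sets are $(a_jR)^\perp$ and $(a_iR)^\perp$, and the same corollary then gives the equivalence with $a_iR\subseteq a_jR$. This identifies $\mathbf{A}=\mathbf{P}^{\mathcal{I}}$ with the zero-one incidence matrix of the partial order on principal ideals, i.e.\ the adjacency matrix of the poset in the sense used here.

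For invertibility, I would choose an ordering $a_{\pi(0)}R,a_{\pi(1)}R,\dots,a_{\pi(t)}R$ of the principal ideals that is a linear extension of the inclusion order (i.e.\ $a_{\pi(p)}R\subseteq a_{\pi(q)}R$ forces $p\leq q$). With respect to this reindexing, $\mathbf{A}$ becomes an upper triangular $0/1$-matrix with $1$'s on the diagonal (since $a_{\pi(p)}R\subseteq a_{\pi(p)}R$ always holds). Its determinant is therefore $1$, and in particular $\mathbf{A}$ is invertible.

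The only mild subtlety I anticipate is the step converting the ``for all $k$'' condition in the definition of $I_i$ into the ``for all $r\in R$'' condition about annihilators; this needs the fact that the representatives $a_0,\dots,a_t$ exhaust the equivalence classes of $\approx$, so that a statement about vanishing of $a_ir$ for arbitrary $r$ really does reduce to checking on the representatives. Everything else is either a direct application of Corollary~\ref{cor:dual} or a standard observation about triangularity of incidence matrices of finite posets.
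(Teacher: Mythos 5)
Your argument is correct and follows essentially the same route as the paper's proof: the paper lists the chain of equivalences $j\in I_i\iff a_ja_k\neq 0$ whenever $a_ia_k\neq 0\iff (a_jR)^\perp\subseteq(a_iR)^\perp\iff a_iR\subseteq a_jR$ (the last step via Corollary~\ref{cor:dual}), and you have simply spelled out the "straightforward" middle step by passing from the representatives $a_k$ to arbitrary $r\in R$ via unit multiples. Your triangularity argument for invertibility is likewise the one the paper appeals to in Remark~\ref{rem:A}.
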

\begin{proof}
Let $\mathbf{A}=(a_{ij})$. From Theorem~\ref{thm:id-SWE},
\[a_{ij}=p^{\mathcal{I}}_{ij}=\begin{cases}
    1,&\text{ if }j\in I_i\,;\\
    0,&\text{ otherwise.}
    \end{cases}\]
Note that the following statements are equivalent:
\begin{enumerate}[label=(\alph*)]
    \item $j\in I_i$\,;
    \item $a_j a_k\neq 0\text{ for all }k\text{ such that }a_ia_k\neq 0$;
    \item $a_i r=0$ for all $r\in R$ such that $a_j r=0$;
    \item $(a_jR)^\perp\subseteq (a_i R)^\perp$;
    \item $a_i R\subseteq a_j R$.
\end{enumerate}
The equivalences of statements (a), (b), (c) and (d) are straightforward to prove and the statements (d) and (e) are equivalent by Corollary~\ref{cor:dual}. The proposition follows.
\end{proof}

\begin{rem}\label{rem:A}
    Since $\mathbf{A}$ is an adjacency matrix, the entries of $\mathbf{A}^{-1}$ are the values of the M\"{o}bius function with respect to the poset. Moreover, there exists a permutation of indices $\{0,1,\dots,t\}$ such that $\mathbf{A}$ is an upper-triangular matrix. 
\end{rem}

Now we are ready to prove our main theorem.
\begin{theorem}\label{thm:main}
    Let $C$ be a linear code of length $n$ over a finite commutative Frobenius ring~$R$, and let $\{a_0,a_1,\dots,a_t\}$ be the set of representatives described in Subsection~\ref{subsection:swe}. Then 
    \[\swe_{C^\perp}(\xv)=\frac{1}{|C|}\swe_C\big(\mathbf{QDA}^{-1}\xv\big)\]
    for some $(t+1)\times (t+1)$ matrices $\mathbf{Q, D, A}$ whose rows and columns are indexed by $\{0,1,\dots,t\}$. In particular, $\mathbf{A}$ is the adjacency matrix of the poset of principal ideals of $R$ under set inclusion, $\mathbf{D}$ is the diagonal matrix with entries $|a_0 R|, |a_1 R|, \dots, |a_t R|$ respectively, and $\mathbf{Q}=~\!\!(q_{ij})$ is the symmetric matrix with 
    \begin{equation}\label{eq:Q}
        q_{ij}:=\begin{cases}
    0,&\text{ if }a_i\neq 0 \text{ and }a_k R\subseteq a_j R\text{ for some }k\text{ such that }a_ia_k\neq 0\,;\\
    1,&\text{ otherwise.}
    \end{cases}
    \end{equation}
    Equivalently,
    \begin{equation}\label{eq:Q-alt}
        q_{ij}=\begin{cases}
    1,&\text{ if }a_j R\subseteq (a_i R)^\perp\,;\\
    0,&\text{ otherwise.}
    \end{cases}
    \end{equation}
\end{theorem}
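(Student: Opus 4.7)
The plan is to combine Theorem~\ref{thm:id-SWE} (applied twice, once to $C^\perp$ with $\mathcal{F}=\mathcal{I}$ and once to $C$ with $\mathcal{F}=\mathcal{J}$), Lemma~\ref{lem:id-SWE}, and Proposition~\ref{prop:A} in a direct chain. First, by Proposition~\ref{prop:A}, applying Theorem~\ref{thm:id-SWE} to $C^\perp$ with $\mathcal{F}=\mathcal{I}$ gives
\[
\swe_{C^\perp}(\mathbf{A}\yv)=\sum_{Y\subseteq t\cdot[n]} B^{\mathcal{I}}_{C^\perp}(Y)\, \prod_{j=0}^t y_j^{|M_j(Y)|}.
\]
I would then substitute Lemma~\ref{lem:id-SWE} and merge the two resulting products (the indices $i$ and $j$ both range over $\{0,1,\dots,t\}$ and correspond to the same partition of $[n]$ into the $M_i(Y)$), rewriting the right-hand side as
\[
\frac{1}{|C|}\sum_{Y\subseteq t\cdot[n]} B^{\mathcal{J}}_C(Y)\, \prod_{j=0}^t \bigl(|a_jR|\,y_j\bigr)^{|M_j(Y)|}.
\]
A second application of Theorem~\ref{thm:id-SWE}, this time to $C$ with $\mathcal{F}=\mathcal{J}$ and indeterminates $\mathbf{D}\yv$, collapses this sum to $\frac{1}{|C|}\swe_C(\mathbf{P}^{\mathcal{J}}\mathbf{D}\yv)$. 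Since $\mathbf{A}$ is invertible by Proposition~\ref{prop:A}, replacing $\yv$ by $\mathbf{A}^{-1}\xv$ yields $\swe_{C^\perp}(\xv)=\frac{1}{|C|}\swe_C(\mathbf{P}^{\mathcal{J}}\mathbf{D}\mathbf{A}^{-1}\xv)$, modulo the identification $\mathbf{P}^{\mathcal{J}}=\mathbf{Q}$.

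To identify $\mathbf{P}^{\mathcal{J}}$ with $\mathbf{Q}$, I would unfold the definition of $J_i$: the entry $(\mathbf{P}^{\mathcal{J}})_{ij}$ equals $1$ precisely when no index $k$ satisfies both $a_ia_k\ne 0$ and $a_kR\subseteq a_jR$. When $a_i\ne 0$, taking $k$ with $a_kR=a_jR$ (which exists since $\{a_0,\dots,a_t\}$ is a complete set of representatives) reduces this condition to $a_ia_j=0$, matching (\ref{eq:Q}); when $a_i=0$ both definitions return $1$. The equivalent form~(\ref{eq:Q-alt}) and the symmetry of $\mathbf{Q}$ then follow from Corollary~\ref{cor:dual}, since in a commutative ring $a_jR\subseteq(a_iR)^\perp$ iff $a_ia_jr=0$ for all $r\in R$, iff $a_ia_j=0$, and this condition is symmetric in $i,j$.

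The main obstacle is this identification $\mathbf{P}^{\mathcal{J}}=\mathbf{Q}$: the two combinatorial descriptions look superficially different, and reconciling them requires both that $\{a_0,\dots,a_t\}$ is a complete system of representatives for principal ideals (so existential quantifiers over $k$ collapse to statements about $a_j$ itself) and that $R$ is commutative. Everything else in the proof is a mechanical chain of substitutions whose correctness is guaranteed by the cited earlier results.
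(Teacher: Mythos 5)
Your proposal reproduces the paper's own proof almost step for step: apply Theorem~\ref{thm:id-SWE} to $C^\perp$ with $\mathcal{F}=\mathcal{I}$, invoke Proposition~\ref{prop:A} to recognize $\mathbf{P}^{\mathcal{I}}=\mathbf{A}$ and its invertibility, bridge to $C$ via Lemma~\ref{lem:id-SWE} (which inserts the diagonal factor $\mathbf{D}$), apply Theorem~\ref{thm:id-SWE} again with $\mathcal{F}=\mathcal{J}$, and identify $\mathbf{P}^{\mathcal{J}}=\mathbf{Q}$ and derive (\ref{eq:Q-alt}) and the symmetry via Corollary~\ref{cor:dual}. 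This is correct and essentially identical to the paper's argument; the only cosmetic difference is that you collapse the equivalence to the condition $a_ia_j=0$ directly rather than the paper's stepwise chain (a)--(d), and you could add the one-line check that each $I_i$ and $J_i$ is non-empty before invoking Theorem~\ref{thm:id-SWE}.
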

\begin{proof} 
Let $\mathcal{I}:=(I_0,I_1,\dots,I_t)$ and $\mathcal{J}:=(J_0,J_1,\dots,J_t)$ be tuples of subsets as defined in Lemma~\ref{lem:id-SWE}. Note that $I_i$ and $J_i$ are non-empty for any $i\in\{0,1,\dots,t\}$, so Theorem~\ref{thm:id-SWE} holds. Applying Theorem~\ref{thm:id-SWE} to $C^\perp$ with $\mathcal{F}=\mathcal{I}$ gives us
    \begin{equation}\label{eq:main-proof-1}
        \swe_{C^\perp}(\xv)=\sum_{Y\subseteq t\cdot[n]} B^{\mathcal{I}}_{C^\perp}(Y)\prod_{j=0}^t y_j^{|M_j(Y)|}\,,
    \end{equation}
    where $\xv=\mathbf{P}^{\mathcal{I}}\yv$. Here, $\mathbf{A}:=\mathbf{P}^{\mathcal{I}}$ is the adjacency matrix of the poset of principal ideals of $R$ under set inclusion; see Proposition~\ref{prop:A}. Since $\mathbf{A}$ is invertible, $\yv=\mathbf{A}^{-1}\xv$. By Lemma~\ref{lem:id-SWE},
    \begin{align*}
        \sum_{Y\subseteq t\cdot[n]} B^{\mathcal{I}}_{C^\perp}(Y)\prod_{j=0}^t y_j^{|M_j(Y)|}
        &=\sum_{Y\subseteq t\cdot[n]} \frac{\prod_{i=0}^t|a_iR|^{|M_i(Y)|}}{|C|}B^{\mathcal{J}}_C(Y)\prod_{j=0}^t y_j^{|M_j(Y)|}\\
        &=\frac{1}{|C|}\sum_{Y\subseteq t\cdot[n]}B^{\mathcal{J}}_C(Y)\prod_{j=0}^t (|a_j R|y_j)^{|M_j(Y)|}\\
        &=\frac{1}{|C|}\sum_{Y\subseteq t\cdot[n]}B^{\mathcal{J}}_C(Y)\prod_{j=0}^t w_j^{|M_j(Y)|},
    \end{align*}
    where $\wv=\mathbf{D}\yv=\mathbf{DA}^{-1}\xv$. Combine the above equation with (\ref{eq:main-proof-1}) and then apply Theorem~\ref{thm:id-SWE} one more time with $\mathcal{F}=\mathcal{J}$ to get
    \[\swe_{C^\perp}(\xv)=\frac{1}{|C|}\swe_C(\mathbf{Q}\wv),\]
    where $\mathbf{Q}:=\mathbf{P}^\mathcal{J}$ is the matrix in Theorem~\ref{thm:id-SWE} when $\mathcal{F}=\mathcal{J}$. Here, $\mathbf{Q}\wv=\mathbf{QDA}^{-1}\xv$, as desired. Note that (\ref{eq:Q}) is equivalent to the definition of $\mathbf{P}^{\mathcal{J}}$ in Theorem~\ref{thm:id-SWE}. To prove the alternative form of $\mathbf{Q}$ in (\ref{eq:Q-alt}), note that the following statements are equivalent:
    \begin{enumerate}[label=(\alph*)]
    \item $j\in J_i$\,;
    \item $a_k R\not\subseteq a_j R\text{ for all }k\text{ such that }a_ia_k\neq 0$;
    \item $a_i a_k=0$ for all $k$ such that $a_k R\subseteq a_j R$;
    \item $a_j R\subseteq (a_i R)^\perp$.
\end{enumerate}
It is not hard to show that these four statements are equivalent. The fact that $\mathbf{Q}$ is symmetric follows straightforwardly from (\ref{eq:Q}) or (\ref{eq:Q-alt}). 
\end{proof}

The expression in Theorem~\ref{thm:main} reveals a surprising implication, namely that the matrix appearing in Wood's identity \cite{Wood1999} in terms of generating character (Theorem~\ref{thm:MW-Wood}) is an integer matrix that can be decomposed into a product of a zero-one symmetric matrix, a diagonal matrix, and the inverse of a zero-one matrix. Moreover, the latter matrix is upper triangular under a suitable permutation of indices; see Remark \ref{rem:A}.
\begin{cor}\label{cor:GC-id}
    Let $R$ be a finite commutative Frobenius ring, and let $\chi:R\rightarrow\mathbb{C}$ be a generating character of $R$. If $\mathbf{S}=(s_{ij})$ is a $(t+1)\times (t+1)$ matrix whose rows and columns are indexed by $\{0,1,\dots,t\}$, where $s_{ij}:=\sum_{r\,\approx\,a_j}\chi(ra_i),$
    then
    \[\mathbf{S=QDA}^{-1},\]
    where $\mathbf{Q, D, A}$ are the matrices described in Theorem~\ref{thm:main}.
\end{cor}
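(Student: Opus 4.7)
The plan is to compare the two MacWilliams identities we now have at our disposal for $\swe_{C^\perp}$: Wood's (Theorem~\ref{thm:MW-Wood}), which uses the character-theoretic matrix $\mathbf{S}$, and our Theorem~\ref{thm:main}, which uses the combinatorial matrix $\mathbf{QDA}^{-1}$. Applying both to the same arbitrary linear code $C$ over $R$ and cancelling the factor $1/|C|$ yields the polynomial identity
\[
\swe_C(\mathbf{S}\xv)\;=\;\swe_C(\mathbf{QDA}^{-1}\xv)
\]
in the indeterminates $x_0,\dots,x_t$, valid for every linear code $C$ over $R$. The task reduces to showing that this family of identities is rich enough, as $C$ varies, to force the matrix equality $\mathbf{S}=\mathbf{QDA}^{-1}$.

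I would specialize to the length-$1$ linear codes $C=a_iR$, one for each $i\in\{0,1,\dots,t\}$. A direct count gives
\[
\swe_{a_iR}(\xv)\;=\;\sum_{j\,:\,a_jR\,\subseteq\,a_iR}|a_jR^{\times}|\,x_j,
\]
because the elements of $a_iR$ in the equivalence class of $a_j$ form the orbit $a_jR^{\times}=\{ua_j:u\in R^{\times}\}$, which sits inside $a_iR$ exactly when $a_jR\subseteq a_iR$. Substituting $\mathbf{S}\xv$ and $\mathbf{QDA}^{-1}\xv$ into this linear form and equating coefficients of each $x_k$ gives, for every $i$ and $k$,
\[
\sum_{j\,:\,a_jR\,\subseteq\,a_iR}|a_jR^{\times}|\,s_{jk}\;=\;\sum_{j\,:\,a_jR\,\subseteq\,a_iR}|a_jR^{\times}|\,(\mathbf{QDA}^{-1})_{jk}.
\]

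Collecting these scalar relations over all $i$ and $k$ assembles into the single matrix equation $\mathbf{B}\mathbf{S}=\mathbf{B}(\mathbf{QDA}^{-1})$, where $\mathbf{B}=\mathbf{A}^{\top}\mathbf{D}'$ and $\mathbf{D}'$ is the diagonal matrix with entries $|a_0R^{\times}|,\dots,|a_tR^{\times}|$. The factor $\mathbf{A}^{\top}$ is invertible because $\mathbf{A}$ is (Proposition~\ref{prop:A} and Remark~\ref{rem:A}), and $\mathbf{D}'$ is invertible since $|a_jR^{\times}|\ge 1$ for every $j$. Hence $\mathbf{B}$ is invertible and cancellation yields $\mathbf{S}=\mathbf{QDA}^{-1}$, which is the claim.

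The only non-formal step is the computation of $\swe_{a_iR}$ and the identification $\mathbf{B}=\mathbf{A}^{\top}\mathbf{D}'$; I expect this to be the main (and mild) obstacle, since the rest is routine coefficient-matching. It is reassuring that no additional property of the generating character $\chi$ is invoked beyond what is already packaged into $\mathbf{S}$: the invertibility that drives the cancellation comes entirely from the poset of principal ideals, precisely in line with the spirit of Theorem~\ref{thm:main}.
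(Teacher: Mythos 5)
Your argument is correct, and it supplies the uniqueness step that the paper leaves implicit. The paper states Corollary~\ref{cor:GC-id} with no proof at all, presenting it as an immediate consequence of juxtaposing Theorem~\ref{thm:MW-Wood} with Theorem~\ref{thm:main}; the logical gap is that equality of the polynomial identities $\swe_C(\mathbf{S}\xv)=\swe_C(\mathbf{QDA}^{-1}\xv)$ for all linear $C$ does not \emph{a priori} force $\mathbf{S}=\mathbf{QDA}^{-1}$. Your choice of test codes is the natural one: the length-$1$ codes $a_iR$ have the simplest possible symmetrized weight enumerators, linear in $\xv$, and the resulting system assembles into $\mathbf{A}^{\top}\mathbf{D}'\,\mathbf{S}=\mathbf{A}^{\top}\mathbf{D}'\,(\mathbf{QDA}^{-1})$ with $\mathbf{A}^{\top}\mathbf{D}'$ invertible by Proposition~\ref{prop:A}, so cancellation is legitimate. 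The formula $\swe_{a_iR}(\xv)=\sum_{j\,:\,a_jR\subseteq a_iR}|a_jR^{\times}|\,x_j$ is correct, since $a_jR^{\times}$ is precisely the $\approx$-class of $a_j$ and it lies inside the ideal $a_iR$ if and only if $a_j\in a_iR$, i.e.\ $a_jR\subseteq a_iR$. In short, you take the same comparison-of-theorems route the author intends, but you make the implicit ``uniqueness of the transformation matrix'' rigorous by exhibiting a finite family of codes whose weight enumerators separate the matrix entries.
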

\begin{rem}
    We can apply Theorem~\ref{thm:MW-Wood} to $C$ and $C^\perp$ respectively and deduce from Lemmas~\ref{lem:Wood} and \ref{lem:AA} that $\mathbf{S}^2=|R|\;\mathbf{I}$, where $\mathbf{I}$ is the identity matrix of size $(t+1)\times (t+1)$. Thus, the matrix $\mathbf{Q}$ must be invertible by Corollary~\ref{cor:GC-id}.  
\end{rem}
\noindent
We also note that our MacWilliams identity in Theorem~\ref{thm:main} contains combinatorial information that is not present in Wood's MacWilliams identity in Theorem~\ref{thm:MW-Wood}. The matrix~$\mathbf{A}$ records the structure of the poset of the principal ideals under set inclusion; see Proposition~\ref{prop:A}. As for the matrix $\mathbf{Q}$, Equation (\ref{eq:Q-alt}) shows that this matrix depends on the dual of each principal ideal.
It might be worth noting that the poset of principal ideals also features in many important results in coding theory; see \cite{ByGrHo08,GrHoFaWoZu14, GrSc2004} for examples. 
\begin{rem}
    Suppose that a ring decomposition of $R$ is known: $R=R_1\oplus\cdots\oplus R_s$. We remark that the matrices $\mathbf{Q}, \mathbf{D}, \mathbf{A}$ for $R$ can be obtained from the analogous matrices $\mathbf{Q}_i, \mathbf{D}_i, \mathbf{A}_i$ corresponding to $R_i$ for $i=1,\dots,s$. In particular, from the definition of the matrices and the fact that the ideals of $R$ are direct sums of ideals of $R_1,\dots, R_s$, it is not hard to see that, with a suitable ordering of the representatives $\{a_0,a_1,\dots,a_t\}$,
    \begin{align*}
        \mathbf{Q} &= \mathbf{Q}_1\otimes\cdots\otimes\mathbf{Q}_s,\\
        \mathbf{D} &= \mathbf{D}_1\otimes\cdots\otimes\mathbf{D}_s,\\
        \mathbf{A} &= \mathbf{A}_1\otimes\cdots\otimes\mathbf{A}_s,
    \end{align*}
    where $\otimes$ denotes the Kronecker product.
\end{rem}

\section{Special cases and applications}\label{sec:applications}
In this section, we will consider special classes of rings and determine more explicit forms of $\mathbf{Q, D, A}$ from Theorem~\ref{thm:main}. We will also compare Theorem~\ref{thm:main} with known results obtained from calculations using the generating character (Theorem~\ref{thm:MW-Wood}). Moreover, we apply our results to the symmetrized Lee weight enumerator and the Lee weight enumerator, and compare with the known results originally obtained from Gray maps in \cite{LiuLiu15}. Finally, we use Theorem~\ref{thm:main} to obtain a new proof of the MacWilliams identity for the Hamming weight enumerator.
\subsection{Finite chain rings}\label{subsection:FCR}
Throughout this subsection, let $R$ be a finite commutative chain ring, that is, a ring whose lattice of ideals forms a chain. Following \cite{NoSa}, we denote the generator of the maximal ideal of $R$ by $\gamma$ and denote by $\nu$ the nilpotency index of $\gamma$, namely the smallest positive integer such that $\gamma^\nu=0$. Moreover, we denote the cardinality of the residue field $R/\gamma R$ by $q$. It is easy to order the ideals and the equivalence classes of $R$ with respect to $\approx$ defined in Subsection~\ref{subsection:swe} since $R$ is a principal ideal ring with $\nu+1$ ideals
\[\gamma^\nu R\subsetneq \cdots\subsetneq \gamma R\subsetneq R\]
with cardinalities $1, q,\dots,q^\nu$ respectively \cite[Lemma 2.4]{NoSa} and $\nu+1$ equivalence classes
\[\gamma^\nu R^\times, \dots, \gamma R^\times, R^\times.\]
We may therefore choose $a_0=\gamma^\nu,\dots, a_{\nu-1}=\gamma, a_\nu=1$ as the representatives of the equivalence classes. This choice implies that $a_i R\subseteq a_j R$  if and only if $i\leq j$. Therefore,
\[\mathbf{A}=\begin{pmatrix}
    1 & 1 &\cdots &1\\
    0 & 1 &\cdots &1\\
    \vdots &\vdots &\ddots &\vdots\\
    0 &0 &\cdots &1
\end{pmatrix}, \quad
\mathbf{D}=\begin{pmatrix}
    1 & 0 &\cdots &0\\
    0 & q &\cdots &0\\
    \vdots &\vdots &\ddots &\vdots\\
    0 &0 &\cdots &q^\nu
\end{pmatrix}, \quad
\mathbf{Q}=\begin{pmatrix}
    1 &\cdots &1 &1\\
    1 &\cdots &1 &0\\
    \vdots &\iddots &\vdots &\vdots\\
    1 &\cdots &0 &0
\end{pmatrix}.\]
Notice that
\begin{equation}\label{eq:QAJ}
    \mathbf{Q=AJ},
\end{equation}
where $\mathbf{J}$ is the anti-diagonal matrix of size $(\nu+1)\times(\nu+1)$ with all 1-entries. If $\nu=1$, then~$R$ is the finite field of $q$ elements and
\[\mathbf{QDA}^{-1}=\begin{pmatrix}
    1 & q-1\\
    1 & -1
\end{pmatrix}.\]
Then Theorem~\ref{thm:main} becomes the well-known original MacWilliams identity \cite{macwilliams63}. For $\nu\geq 2$, the explicit form of $\mathbf{QDA}^{-1}$ is
\[\mathbf{QDA}^{-1}=\begin{pmatrix}
    1 & q-1 & q^2-q &\cdots &q^{\nu}-q^{\nu-1} \\
    1 & q-1 & q^2-q &\cdots &-q^{\nu-1}\\
    \vdots &\vdots &\vdots &\iddots  &\vdots\\
    1 & q-1 &-q &\cdots &0\\
    1 &-1 &0 &\cdots &0
\end{pmatrix}.\]
This means that we can obtain the following identity involving the generating character from Corollary \ref{cor:GC-id} and the explicit form of MacWilliams identity from Theorem~\ref{thm:main}.
\begin{cor}\label{cor:FCR}
    Let $C$ be a linear code over a finite commutative chain ring $R$, and let $\chi:R\rightarrow \mathbb{C}$ be a generating character of $R$. 
    Then
    \[
    \swe_{C^\perp}(\xv)=\frac{1}{|C|}\swe_C(\mathbf{S}\xv)\,,
    \]
    where $\mathbf{S}=(s_{ij})$ is a $(\nu+1)\times (\nu+1)$ matrix whose rows and columns are indexed by $\{0,1,\dots,\nu\}$ and for any $i,j\in\{0,1,\dots,\nu\}$,
    \[s_{ij}:=\sum_{r\in \gamma^{\nu-j}R^\times}\chi(r \gamma^{\nu-i})=
    \begin{cases}
        1 & \text{if }j=0\,;\\
        q^{j}-q^{j-1} & \text{if }1\leq j\leq \nu-i\,;\\
        -q^{\nu-i} & \text{if }j= \nu+1-i\,;\\
        0 & \text{otherwise.}
    \end{cases}\]
\end{cor}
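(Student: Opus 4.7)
The plan is to combine Theorem~\ref{thm:main} with Corollary~\ref{cor:GC-id} and carry out the explicit matrix multiplication indicated just above the corollary. Theorem~\ref{thm:main} gives $\swe_{C^\perp}(\xv)=\tfrac{1}{|C|}\swe_C(\mathbf{QDA}^{-1}\xv)$, and Corollary~\ref{cor:GC-id} identifies $\mathbf{S}=\mathbf{QDA}^{-1}$, so the identity $\swe_{C^\perp}(\xv)=\tfrac{1}{|C|}\swe_C(\mathbf{S}\xv)$ with $\mathbf{S}$ the character-sum matrix is immediate. What requires work is checking that the entries of $\mathbf{QDA}^{-1}$ match the four-case piecewise formula given for $s_{ij}$.

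For the computation I would use the explicit descriptions already recorded in this subsection. With the representatives $a_i=\gamma^{\nu-i}$, we have $|a_iR|=q^i$, the matrix $\mathbf{A}$ is the upper-triangular all-ones matrix, and $\mathbf{D}=\mathrm{diag}(1,q,\ldots,q^\nu)$. For $\mathbf{Q}$, since $(a_iR)^\perp=\gamma^iR$ in a chain ring, the condition $a_jR\subseteq(a_iR)^\perp$ becomes $i+j\leq\nu$, producing the anti-triangular $0{-}1$ matrix displayed earlier. The inverse $\mathbf{A}^{-1}$ is the familiar bidiagonal matrix with $1$ on the main diagonal and $-1$ on the superdiagonal, so $\mathbf{DA}^{-1}$ has only two nonzero diagonals: entries $q^i$ on the main diagonal and $-q^i$ on the superdiagonal.

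Finally I would compute $\mathbf{Q}(\mathbf{DA}^{-1})$ entry by entry. Because $\mathbf{DA}^{-1}$ is bidiagonal, each entry of the product collapses to $(\mathbf{QDA}^{-1})_{ij}=q_{ij}q^j-q_{i,j-1}q^{j-1}$, with the second term absent when $j=0$. A short case analysis on $i+j$ versus $\nu$ then yields exactly the four cases in the statement: $j=0$ gives $1$; $1\le j\le \nu-i$ makes both indicators equal to $1$, giving $q^j-q^{j-1}$; $j=\nu+1-i$ kills $q_{ij}$ but keeps $q_{i,j-1}=1$, giving $-q^{\nu-i}$; larger $j$ kills both indicators, giving $0$. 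The character-sum expression for $s_{ij}$ then follows by rewriting the equivalence class of $a_j=\gamma^{\nu-j}$ as $\gamma^{\nu-j}R^\times$ in the formula of Corollary~\ref{cor:GC-id}. The only non-routine step is the case analysis itself, which amounts to little more than bookkeeping given the sparsity of $\mathbf{DA}^{-1}$ and the cleanness of the inequality $i+j\leq\nu$, so no substantive obstacle arises.
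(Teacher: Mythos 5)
Your proposal is correct and follows essentially the same route as the paper: identify $\mathbf{S}=\mathbf{QDA}^{-1}$ via Theorem~\ref{thm:main} and Corollary~\ref{cor:GC-id}, then compute the product explicitly for the chain-ring matrices $\mathbf{A}$, $\mathbf{D}$, $\mathbf{Q}$ given in the subsection. The paper simply displays the resulting matrix without writing out the entry-level case analysis; your bidiagonal observation for $\mathbf{DA}^{-1}$ and the reduction to the inequality $i+j\le\nu$ is a clean way to fill in that omitted bookkeeping, but it is the same computation.
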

\begin{ex}\label{ex:Z4/<x^3-2,2x>}
Consider the local ring \[R:=\ZZ_4[x]/\langle x^3-2,2x\rangle=\{a+bx+cx^2\;\colon\;a\in\ZZ_4,\; b,c\in\ZZ_2,\; x^3=2,\; 2x=0\}\,,\] 
where $\ZZ_4:=\{0,1,2,3\}$ and $\ZZ_2:=\{0,1\}$ are the rings of integers modulo 4 and 2, respectively. Here, $q=2$, $\nu=4$ and we choose $\gamma=x$ as the generator of the maximal ideal of $R$. 
The matrix $\mathbf{S}$ according to Corollary \ref{cor:FCR} is 
\[\mathbf{S}=\begin{pmatrix}
    1 & q-1 & q^2-q &q^3-q^2 &q^4-q^3 \\
    1 & q-1 & q^2-q &q^3-q^2 &-q^3\\
    1 & q-1 & q^2-q &-q^2 &0\\
    1 & q-1 &-q &0 &0\\
    1 &-1 &0 &0 &0
\end{pmatrix}=\begin{pmatrix*}[r]
    1 & 1 & 2 &4 &8 \\
    1 & 1 & 2 &4 &-8\\
    1 & 1 & 2 &-4 &0\\
    1 & 1 &-2 &0 &0\\
    1 &-1 &0 &0 &0
\end{pmatrix*}.\]
This matrix represents the same transformation as $S_3$ in \cite[Table 2]{DoSaSz19} obtained from the generating character, the only difference being that the representatives are in a different order. In particular, $a_0=0$, $a_1=1$, $a_2=\gamma$, $a_3=\gamma^2$, $a_4=\gamma^3$ in \cite[Table 2]{DoSaSz19}, while our choice of representatives is $a_0=0$, $a_1=\gamma^3$, $a_2=\gamma^2$, $a_3=\gamma$, $a_4=1$.
\end{ex}

\subsection{Finite principal ideal rings}
Throughout this subsection, let $R$ be a finite commutative principal ideal ring, namely the ring in which any ideal $I$ of $R$ satisfies $I=aR$ for some $a\in R$. This class of rings includes integer modulo rings and finite chain rings. In fact, a finite commutative principal ideal ring is isomorphic to a product of finite chain rings; see \cite[Proposition~2.7]{DoKiKu09}.

Observe that for each $i\in\{0,1,\dots,t\}$, $(a_i R)^\perp=a_j R$ for some $j\in\{0,1,\dots,t\}$. From Lemma~\ref{lem:AA}, $(a_j R)^\perp=a_i R$. This implies that
\begin{equation}\label{eq:PIR}
    \{a_0 R,a_1 R,\dots,a_t R\}=\{(a_0 R)^\perp,(a_1 R)^\perp,\dots,(a_t R)^\perp\}\,.
\end{equation}
Therefore, the latter set is comprised of all $t+1$ principal ideals of $R$.
\begin{cor}\label{cor:FPIR}
    Let $R$ be a finite commutative principal ideal ring, and let $\mathbf{A}=(a_{ij})$ and $\mathbf{Q}=(q_{ij})$ be the matrices defined in Theorem~\ref{thm:main}. Then
    \[\mathbf{Q}=\mathbf{AP}\]
    for some permutation matrix $\mathbf{P}$. Furthermore, there exists a choice of representatives of equivalence classes $\{a_0, a_1,\dots,a_t\}$ such that (\ref{eq:QAJ}) holds, namely that $\mathbf{P}=\mathbf{J}$ is the anti-diagonal matrix with all 1-entries of size $(t+1)\times (t+1)$.
\end{cor}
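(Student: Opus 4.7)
The plan is to identify the map $a_iR\mapsto (a_iR)^\perp$ with an involution $\sigma$ on $\{0,1,\dots,t\}$, to recognize $\mathbf{Q}$ as the column permutation of $\mathbf{A}$ induced by $\sigma$, and then to relabel the representatives so that $\sigma$ becomes the order-reversing involution $i\mapsto t-i$.

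By (\ref{eq:PIR}), for every $i\in\{0,\dots,t\}$ there is a unique $\sigma(i)\in\{0,\dots,t\}$ with $(a_iR)^\perp=a_{\sigma(i)}R$, and by Lemma~\ref{lem:AA} the map $\sigma$ is an involution. From the alternative form (\ref{eq:Q-alt}) of $\mathbf{Q}$, $q_{ij}=1$ iff $a_jR\subseteq (a_iR)^\perp=a_{\sigma(i)}R$; by Corollary~\ref{cor:dual} together with Lemma~\ref{lem:AA}, this is also equivalent to $a_iR\subseteq (a_jR)^\perp=a_{\sigma(j)}R$, i.e.\ $a_{i,\sigma(j)}=1$. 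Taking $\mathbf{P}$ to be the permutation matrix whose $(k,j)$-entry is $1$ iff $k=\sigma(j)$, a direct computation gives $(\mathbf{AP})_{ij}=a_{i,\sigma(j)}=q_{ij}$, so $\mathbf{Q}=\mathbf{AP}$.

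To obtain $\mathbf{P}=\mathbf{J}$ I would reorder the representatives. Such a relabeling conjugates $\sigma$ by an arbitrary permutation of $\{0,\dots,t\}$, and two involutions on this set are conjugate iff they share the same number of fixed points. The target involution $i\mapsto t-i$ has exactly one fixed point when $t$ is even and none when $t$ is odd. To match this, I would invoke the structure theorem \cite[Proposition~2.7]{DoKiKu09} to write $R\cong R_1\oplus\cdots\oplus R_s$ with each $R_k$ a finite commutative chain ring of nilpotency index $\nu_k$. Principal ideals and the dual operator factor componentwise over this decomposition, and within each chain ring $R_k$ the ideal $\gamma_k^\ell R_k$ is self-dual iff $\ell=\nu_k-\ell$; so each factor contributes at most one self-dual ideal, and hence $R$ has at most one self-dual principal ideal. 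The parity of $t+1=\prod_k(\nu_k+1)$ then falls into line: $t+1$ is odd exactly when every $\nu_k$ is even, which is precisely when $R$ has a self-dual principal ideal. Thus $\sigma$ and $i\mapsto t-i$ share the same number of fixed points, so a suitable reordering of representatives produces $\sigma(i)=t-i$ for all $i$, giving $\mathbf{P}=\mathbf{J}$.

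The main obstacle is the parity/fixed-point matching in the second part: everything before it reduces to unpacking (\ref{eq:Q-alt}) together with the duality machinery already in hand, whereas pinning down that a finite commutative PIR admits at most one self-dual principal ideal (and does so precisely when $t+1$ is odd) requires the explicit product-of-chain-rings decomposition.
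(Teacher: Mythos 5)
Your proposal is correct and follows essentially the same route as the paper: unpack (\ref{eq:Q-alt}) via the duality involution $\sigma$ (the paper calls it $\phi$) to get $\mathbf{Q}=\mathbf{AP}$, then use the product-of-chain-rings decomposition to show $R$ has at most one self-dual principal ideal and reorder the representatives to make $\sigma(i)=t-i$. The one place you are more explicit than the paper is the conjugacy/parity step --- the paper only proves ``at most one fixed point'' and then asserts the reordering exists, whereas you observe that the number of fixed points of an involution on a $(t+1)$-element set is forced modulo $2$, so ``at most one'' together with the parity of $t+1=\prod_k(\nu_k+1)$ pins it down exactly; this neatly fills in a detail the paper leaves implicit.
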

\begin{proof}
From (\ref{eq:PIR}), we know that there exists an involution $\phi$ on $\{0,1,\dots,t\}$ such that $(a_j R)^\perp=a_{\phi(j)}R$ for all $j=0,1,\dots,t$. From (\ref{eq:Q-alt}) and Corollary~\ref{cor:dual}, $q_{ij}=1$ if and only if $a_j R\subseteq (a_i R)^\perp$ if and only if $a_i R\subseteq (a_j R)^\perp=a_{\phi(j)} R$ if and only if the matrix entry $a_{i\phi(j)}$ equals $1$. Thus, $\mathbf{Q}=\mathbf{AP}$, where $\mathbf{P}$ is a permutation matrix with respect to the permutation~$\phi^{-1}=\phi$. 

Now suppose that $R$ is isomorphic to a product of $s$ finite chain rings $R_1,\dots, R_s$ with respect to the decomposition in \cite[Proposition 2.7]{DoKiKu09}. For each $m\in\{1,\dots,s\}$, let $\gamma_m$ and $\nu_m$ be the generator of the maximal ideal and the nilpotency index of $R_m$, respectively. If there exists an index $i\in\{0,1,\dots,t\}$ such that $(a_i R)^\perp=a_i R$, namely that $a_i R$ is a self-dual code of length 1, then we know from \cite[Theorem 6.4]{DoKiKu09} that there exist self-dual linear codes $C_1,\dots,C_s$ of length 1 over $R_1,\dots, R_s$ respectively. Recall from the previous subsection that for each $m\in\{1,\dots,s\}$, $R_m$ has exactly $\nu_m+1$ distinct ideals
\[\gamma_m^{\nu_m} R_m\subsetneq \cdots\subsetneq \gamma_m R_m\subsetneq R_m\,,\]
so $C_m=\gamma_m^{j} R_m$ for some  $j\in\{0,1,\dots,\nu_m\}$. By Lemma~\ref{lem:Wood}, there can be at most one $C_m$ such that $C_m=C_m^\perp$. Therefore, there is at most one possible index $i\in\{0,1,\dots,t\}$ such that $(a_i R)^\perp=a_i R$. This means that we can choose a suitable order of representatives such that 
$(a_i R)^\perp=a_{t-i} R$ for any $i\in\{0,1,\dots,t\}$. In this case, the corresponding permutation matrix is an anti-diagonal matrix and thus (\ref{eq:QAJ}) holds.
\end{proof}
\begin{rem}
If (\ref{eq:QAJ}) holds, then
\[\mathbf{S}=\mathbf{A(JD)A}^{-1}\]
from Corollary \ref{cor:GC-id}. Here, $\mathbf{J}\mathbf{D}$ is the anti-diagonal matrix of size $(t+1)\times(t+1)$ with entries $|a_0 R|, |a_1 R|,\dots,|a_t R|$ read from the bottom left.
\end{rem}
For the case when $R$ is the ring of integers modulo $m$, we can choose the representatives $\{a_0,a_1\dots,a_t\}$ of the equivalences classes such that
\[1=|a_0 R|<|a_1 R|<\cdots<|a_t R|=m\,.\]
Here, $(a_i R)^\perp =a_{t-i} R$ for all $i=0,1,\dots,t$ and thus (\ref{eq:QAJ}) holds, as stated in Corollary~\ref{cor:FPIR}. 
\begin{ex}\label{ex:Z12}
Consider the case when $R=\ZZ_{12}:=\{0,1,\dots,11\}$ is the (non-local) ring of integers modulo 12 and choose $a_0=0$, $a_1=6$, $a_2=4$, $a_3=3$, $a_4=2$, $a_5=1$. The lattice of principal ideals of $\ZZ_{12}$ under set inclusion is presented in Figure~\ref{fig:Z12} below.
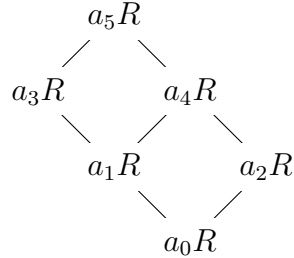
\begin{figure}[ht]
    \centering
    \begin{tikzpicture}
        \draw (0,0) -- (1,1) -- (0,2) -- (-1,1);
        \draw (0,0) -- (-1,1) -- (-2,2) -- (-1,3) -- (0,2);
        \draw (0,0) node [fill=white] {$a_0 R$};
        \draw (-1,1) node [fill=white] {$a_1 R$};
        \draw (1,1) node [fill=white] {$a_2 R$};
        \draw (0,2) node [fill=white] {$a_4 R$};
        \draw (-2,2) node [fill=white] {$a_3 R$};
        \draw (-1,3) node [fill=white] {$a_5 R$};
    \end{tikzpicture}
    \caption{Hasse diagram for principal ideals of $\ZZ_{12}$.}
    \label{fig:Z12}
\end{figure}

The matrices $\mathbf{A, D, Q}$ according to Theorem~\ref{thm:main} are
    \[\mathbf{A}=\begin{pmatrix}
    1 & 1 & 1 & 1 & 1 &1\\
    0 & 1 & 0 & 1 & 1 &1\\
    0 & 0 & 1 & 0 & 1 &1\\
    0 & 0 & 0 & 1 & 0 &1\\
    0 & 0 & 0 & 0 & 1 &1\\
    0 & 0 & 0 & 0 & 0 &1
\end{pmatrix},\quad
\mathbf{D}=\begin{pmatrix}
    1 & 0 & 0 & 0 & 0 &0\\
    0 & 2 & 0 & 0 & 0 &0\\
    0 & 0 & 3 & 0 & 0 &0\\
    0 & 0 & 0 & 4 & 0 &0\\
    0 & 0 & 0 & 0 & 6 &0\\
    0 & 0 & 0 & 0 & 0 &12
\end{pmatrix},\quad
\mathbf{Q}=\begin{pmatrix}
    1 & 1 & 1 & 1 & 1 &1\\
    1 & 1 & 1 & 0 & 1 &0\\
    1 & 1 & 0 & 1 & 0 &0\\
    1 & 0 & 1 & 0 & 0 &0\\
    1 & 1 & 0 & 0 & 0 &0\\
    1 & 0 & 0 & 0 & 0 &0
\end{pmatrix}.\]
Note that (\ref{eq:QAJ}) holds. The transformation matrix with respect to Theorem~\ref{thm:main} is
\[\mathbf{QDA}^{-1}=
\begin{pmatrix*}[r]
    1 & 1 & 2 & 2 & 2 &4\\
    1 & 1 & 2 & -2 & 2 &-4\\
    1 & 1 & -1 & 2 & -1 &-2\\
    1 & -1 & 2 & 0 & -2 &0\\
    1 & 1 & -1 & -2 & -1 &2\\
    1 & -1 & -1 & 0 & 1 &0
\end{pmatrix*}.\]
    It is easy to check that this matrix matches the transformation matrix $\mathbf{S}$ in Theorem~\ref{thm:MW-Wood} obtained from the generating character $\chi$, where $\chi(a):=e^{\pi i a/6}$ for $a\in\{0,1,\dots,11\}$.
\end{ex}
For the next example, we demonstrate how the MacWilliams identity in Theorem~\ref{thm:main} contains information regarding the combinatorial structure of the principal ideals of the ring. In particular, we give another example which gives a different transformation matrix~$\mathbf{S}$ compared to Example~\ref{ex:Z12}, but with the same matrices~$\mathbf{A}$ and~$\mathbf{Q}$.
\begin{ex}\label{ex:F2+uF2+vF2}
Consider the (non-local) principal ideal ring 
\begin{equation*}
    R:=\FF_2+u\FF_2+v\FF_2=\{a+bu+cv\;\colon\;a,b,c\in\FF_2,u^2=0,v^2=v,uv=vu=0\}
\end{equation*}
which was studied in \cite{LiuLiu15}. It is easy to check that $R$ has six equivalence classes with respect to the equivalence relation $\approx$ and we choose $a_0=0$, $a_1=u$, $a_2=v$, $a_3=1+v$, $a_4=u+v$, $a_5=1$. The six principal ideals of $R$ are
    \begin{align*}
        a_0 R &=\{0\}\,; &a_2 R &=\{0,v\}\,; &a_4 R&=\{0,u,v,u+v\}\,;\\
        a_1 R &=\{0,u\}\,; &a_3 R &=\{0,u,1+v,1+u+v\}\,; &a_5 R&=R\,.
    \end{align*}
    We know that $R$ is not a chain ring from its lattice of principal ideals under set inclusion as shown in Figure \ref{fig:F2+uF2+vF2} below. \begin{figure}[ht]
    \centering
    \begin{tikzpicture}
        \draw (0,0) -- (1,1) -- (0,2) -- (-1,1);
        \draw (0,0) -- (-1,1) -- (-2,2) -- (-1,3) -- (0,2);
        \draw (0,0) node [fill=white] {$a_0 R$};
        \draw (-1,1) node [fill=white] {$a_1 R$};
        \draw (1,1) node [fill=white] {$a_2 R$};
        \draw (0,2) node [fill=white] {$a_4 R$};
        \draw (-2,2) node [fill=white] {$a_3 R$};
        \draw (-1,3) node [fill=white] {$a_5 R$};
    \end{tikzpicture}
    \caption{Hasse diagram for principal ideals of $\FF_2+u\FF_2+v\FF_2$.}
    \label{fig:F2+uF2+vF2}
\end{figure}
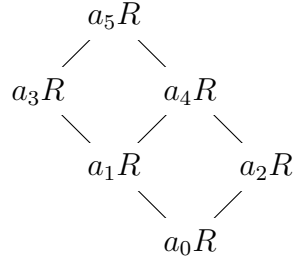

The matrices $\mathbf{A, D, Q}$ according to Theorem~\ref{thm:main} are
    \[\mathbf{A}=\begin{pmatrix}
    1 & 1 & 1 & 1 & 1 &1\\
    0 & 1 & 0 & 1 & 1 &1\\
    0 & 0 & 1 & 0 & 1 &1\\
    0 & 0 & 0 & 1 & 0 &1\\
    0 & 0 & 0 & 0 & 1 &1\\
    0 & 0 & 0 & 0 & 0 &1
\end{pmatrix},
\quad
\mathbf{D}=\begin{pmatrix}
    1 & 0 & 0 & 0 & 0 &0\\
    0 & 2 & 0 & 0 & 0 &0\\
    0 & 0 & 2 & 0 & 0 &0\\
    0 & 0 & 0 & 4 & 0 &0\\
    0 & 0 & 0 & 0 & 4 &0\\
    0 & 0 & 0 & 0 & 0 &8
\end{pmatrix},
\quad
\mathbf{Q}=\begin{pmatrix}
    1 & 1 & 1 & 1 & 1 &1\\
    1 & 1 & 1 & 0 & 1 &0\\
    1 & 1 & 0 & 1 & 0 &0\\
    1 & 0 & 1 & 0 & 0 &0\\
    1 & 1 & 0 & 0 & 0 &0\\
    1 & 0 & 0 & 0 & 0 &0
\end{pmatrix}.\]
Then the transformation matrix with respect to Theorem~\ref{thm:main} is
\begin{equation}\label{eq:F2+uF2+vF2}
    \mathbf{QDA}^{-1}=
\begin{pmatrix*}[r]
    1 & 1 & 1 & 2 & 1 &2\\
    1 & 1 & 1 & -2 & 1 &-2\\
    1 & 1 & -1 & 2 & -1 &-2\\
    1 & -1 & 1 & 0 & -1 &0\\
    1 & 1 & -1 & -2 & -1 &2\\
    1 & -1 & -1 & 0 & 1 &0
\end{pmatrix*}.
\end{equation}
Now we compare this example with Example~\ref{ex:Z12}. Observe that $\FF_2+u\FF_2+v\FF_2$ is not isomorphic to $\ZZ_{12}$, but they produce the same matrices $\mathbf{A}$ and $\mathbf{Q}$. The reasons for this are as follows: as shown in Figures~\ref{fig:Z12} and \ref{fig:F2+uF2+vF2}, both rings have the same poset of principal ideals, so they share the adjacency matrix~$\mathbf{A}$. Moreover, both rings satisfy $(a_i R)^\perp=a_{t-i} R$ for any $i\in\{0,1,\dots,t\}$ and hence they share the same matrix~$\mathbf{Q}$.
\end{ex}
\subsection{Miscellaneous rings and application to other enumerators}
Dougherty, Salt\"{u}rk, and Szabo \cite{DoSaSz19} gave the generating characters for all commutative local Frobenius rings of 16 elements and presented their transformation matrix $\mathbf{S}$ with respect to the definition in Theorem~\ref{thm:MW-Wood}. We can also easily find these transformation matrices with Theorem~\ref{thm:main}. We provide an example below.
\begin{ex}\label{ex:F2[u,v]} Consider the local ring
    \[R:=\FF_2[u,v]/\langle u^2,v^2\rangle=\{a+bu+cv+duv\;\colon\;a,b,c,d\in\FF_2,\; u^2=0,\; v^2=0\}\,.\]
    Following the order of equivalence classes in \cite[Example 1]{DoSaSz19}, we choose $a_0=0$, $a_1=1$, $a_2=u$, $a_3=v$, $a_4=u+v$, $a_5=uv$. The six principal ideals of $R$ are
    \begin{align*}
        a_0 R &=\{0\}\,; &a_2 R &=\{0,u,uv,u+uv\}\,; &a_4 R&=\{0,u+v,uv,u+v+uv\}\,;\\
        a_1 R &=R\,; &a_3 R &=\{0,v,uv,v+uv\}\,; &a_5 R&=\{0,uv\}\,.
    \end{align*}
    However, $R$ is not a principal ideal ring. The lattice of principal ideals under set inclusion is shown in Figure \ref{fig:F2+uF2+vF2+uvF2} below. 

    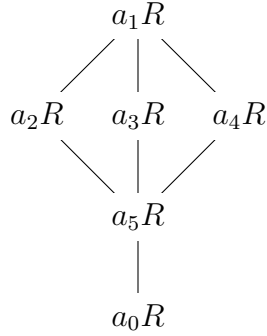
\begin{figure}[ht]
    \centering
    \begin{tikzpicture}
        \draw (0,0) -- (0,1.33) -- (0,2.67) -- (0,4);
        \draw (0,1.33) -- (-1.33,2.67) -- (0,4) -- (1.33,2.67) -- (0,1.33);
        \draw (0,0) node [fill=white] {$a_0 R$};
        \draw (0,4) node [fill=white] {$a_1 R$};
        \draw (-1.33,2.67) node [fill=white] {$a_2 R$};
        \draw (0,2.67) node [fill=white] {$a_3 R$};
        \draw (1.33,2.67) node [fill=white] {$a_4 R$};
        \draw (0,1.33) node [fill=white] {$a_5 R$};
    \end{tikzpicture}
    \caption{Hasse diagram for principal ideals of $\FF_2[u,v]/\langle u^2,v^2\rangle$.}
    \label{fig:F2+uF2+vF2+uvF2}
\end{figure}

    The matrices $\mathbf{A, D, Q}$ according to Theorem~\ref{thm:main} are
    \[\mathbf{A}=\begin{pmatrix}
    1 & 1 & 1 & 1 & 1 &1\\
    0 & 1 & 0 & 0 & 0 &0\\
    0 & 1 & 1 & 0 & 0 &0\\
    0 & 1 & 0 & 1 & 0 &0\\
    0 & 1 & 0 & 0 & 1 &0\\
    0 & 1 & 1 & 1 & 1 &1
\end{pmatrix},\quad
\mathbf{D}=\begin{pmatrix}
    1 & 0 & 0 & 0 & 0 &0\\
    0 & 16 & 0 & 0 & 0 &0\\
    0 & 0 & 4 & 0 & 0 &0\\
    0 & 0 & 0 & 4 & 0 &0\\
    0 & 0 & 0 & 0 & 4 &0\\
    0 & 0 & 0 & 0 & 0 &2
\end{pmatrix},\quad
\mathbf{Q}=\begin{pmatrix}
    1 & 1 & 1 & 1 & 1 &1\\
    1 & 0 & 0 & 0 & 0 &0\\
    1 & 0 & 1 & 0 & 0 &1\\
    1 & 0 & 0 & 1 & 0 &1\\
    1 & 0 & 0 & 0 & 1 &1\\
    1 & 0 & 1 & 1 & 1 &1
\end{pmatrix}.\]
Then the transformation matrix with respect to Theorem~\ref{thm:main} is
\[\mathbf{QDA}^{-1}=
\begin{pmatrix*}[r]
    1 & 8 & 2 & 2 & 2 &1\\
    1 & 0 & 0 & 0 & 0 &-1\\
    1 & 0 & 2 & -2 & -2 &1\\
    1 & 0 & -2 & 2 & -2 &1\\
    1 & 0 & -2 & -2 & 2 &1\\
    1 & -8 & 2 & 2 & 2 &1
\end{pmatrix*}.\]
This is the same matrix as $S$ in \cite[Example 1]{DoSaSz19} which was obtained from the generating character. This example also shows that $\mathbf{Q}$ is not necessarily a product of $\mathbf{A}$ and a permutation matrix as in the case when the ring is a finite principal ideal ring; see Corollary \ref{cor:FPIR}.
\end{ex}

For our next example, we apply Theorem~\ref{thm:main} to reprove the MacWilliams identities for the Lee weight enumerator and the symmetrized Lee weight enumerator \cite[Theorem 3.8]{LiuLiu15}, which were originally obtained using a Gray map. 
\begin{ex}\label{ex:Lee}
    Let $C$ be a linear code over $R:=\FF_2+u\FF_2+v\FF_2$ as defined in Example~\ref{ex:F2+uF2+vF2}. Let $\mathrm{Lee}_C(x,y)$ denote the \emph{Lee weight enumerator} of $C$ following \cite[Definition 3.1]{LiuLiu15}, and let $\mathrm{slwe}_C(x,y)$ denote the \emph{symmetrized Lee weight enumerator} of $C$ following the definition of $\mathrm{Swe}_C(x,y)$ in \cite[Definition 3.4]{LiuLiu15}. We remark that the authors in \cite{LiuLiu15} use the term ``symmetrized weight enumerator", but we use a different term since our definition of symmetrized weight enumerator is not the same.

    From the definition of Lee weight enumerator in \cite[Definition 3.1]{LiuLiu15}, it can be shown that 
\[\mathrm{Lee}_C(x,y)=\mathrm{swe}_C(x^3,xy^2,y^3,xy^2,x^2y,x^2y)\,.\]
From Theorem~\ref{thm:main} and (\ref{eq:F2+uF2+vF2}), $\mathrm{Lee}_{C^\perp}(x,y)=\frac{1}{|C|}\swe_C(\yv)$, where
\[
\begin{pmatrix}
    y_0\\y_1\\y_2\\y_3\\y_4\\y_5
\end{pmatrix}=
\begin{pmatrix*}[r]
    1 & 1 & 1 & 2 & 1 &2\\
    1 & 1 & 1 & -2 & 1 &-2\\
    1 & 1 & -1 & 2 & -1 &-2\\
    1 & -1 & 1 & 0 & -1 &0\\
    1 & 1 & -1 & -2 & -1 &2\\
    1 & -1 & -1 & 0 & 1 &0
\end{pmatrix*}
\begin{pmatrix}
    x^3\\xy^2\\y^3\\xy^2\\x^2y\\x^2y
\end{pmatrix}
=
\begin{pmatrix}
    x^3+3x^2y+3xy^2+y^3\\x^3-x^2y-xy^2+y^3\\x^3-3x^2y+3xy^2-y^3\\x^3-x^2y-xy^2+y^3\\x^3+x^2y-xy^2-y^3\\x^3+x^2y-xy^2-y^3
\end{pmatrix}.
\]
This gives us the MacWilliams identity for the Lee weight enumerator \cite[Theorem 3.8(i)]{LiuLiu15}:
\[\mathrm{Lee}_{C^\perp}(x,y)=\frac{1}{|C|}\mathrm{Lee}_C(x+y,x-y)\,.\]
From the definition of the symmetrized Lee weight enumerator in \cite[Definition 3.4]{LiuLiu15}, one can see that
\[\mathrm{slwe}_C(x_0,x_1,x_2,x_3)=\mathrm{swe}_C(x_0,x_2,x_3,x_2,x_1,x_1)\,.\]
From Theorem~\ref{thm:main} and (\ref{eq:F2+uF2+vF2}), $\mathrm{slwe}_{C^\perp}(x_0,x_1,x_2,x_3)=\frac{1}{|C|}\swe_C(\yv)$, where
\[
\begin{pmatrix}
    y_0\\y_1\\y_2\\y_3\\y_4\\y_5
\end{pmatrix}=
\begin{pmatrix*}[r]
    1 & 1 & 1 & 2 & 1 &2\\
    1 & 1 & 1 & -2 & 1 &-2\\
    1 & 1 & -1 & 2 & -1 &-2\\
    1 & -1 & 1 & 0 & -1 &0\\
    1 & 1 & -1 & -2 & -1 &2\\
    1 & -1 & -1 & 0 & 1 &0
\end{pmatrix*}
\begin{pmatrix}
    x_0\\x_2\\x_3\\x_2\\x_1\\x_1
\end{pmatrix}
=
\begin{pmatrix}
    x_0+3x_1+3x_2+x_3\\x_0-x_1-x_2+x_3\\x_0-3x_1+3x_2-x_3\\x_0-x_1-x_2+x_3\\x_0+x_1-x_2-x_3\\x_0+x_1-x_2-x_3
\end{pmatrix}.
\]
Therefore, we obtain the following MacWilliams identity for the symmetrized Lee weight enumerator \cite[Theorem 3.8(ii)]{LiuLiu15}: 
\begin{align*}
    &\mathrm{slwe}_{C^\perp}(x_0,x_1,x_2,x_3)\\
    &\!\!=\!\frac{1}{|C|}\mathrm{slwe}_C(x_0+3x_1+3x_2+x_3,x_0+x_1-x_2-x_3,x_0-x_1-x_2+x_3,x_0-3x_1+3x_2-x_3)\,.
\end{align*}
\end{ex}

Finally, we demonstrate how Theorem~\ref{thm:main} leads to a new proof of the MacWilliams identity for the Hamming weight enumerator. 
\begin{ex}\label{ex:Hamming}
    Let $R$ be any finite commutative Frobenius ring, and let $\{a_0,a_1,\dots,a_t\}$ be the set of representatives described in Subsection~\ref{subsection:swe}. Without loss of generality, we choose $a_0:=0$ and $a_t:=1$ for the rest of this example. Now let $\mu$ and $\mathbf{A}$ be the M\"{o}bius function and the adjacency matrix of the poset of principal ideals of $R$ under set inclusion, respectively. Since $\mathbf{A}^{-1}=(b_{ij})$ satisfies $b_{ij}=\mu(a_i R, a_j R)$ for any $i,j\in\{0,1,\dots,t\}$, it follows that 
\begin{equation}\label{eq:mobius-1}
   \mu(a_k R, a_0 R)=
    \begin{cases}
    1,&\text{ if }k=0\,;\\
    0,&\text{ otherwise};
    \end{cases}  
\end{equation}
\begin{equation}\label{eq:mobius-2}
   \sum_{j=0}^t\mu(a_k R, a_j R)=
    \begin{cases}
    1,&\text{ if }k=t\,;\\
    0,&\text{ otherwise}.
    \end{cases}  
\end{equation}
Let $\mathbf{Q}=(q_{ij})$ and $\mathbf{D}$ be the $(t+1)\times (t+1)$ matrices described in Theorem~\ref{thm:main} and define $\mathbf{S}=(s_{ij})$ by $\mathbf{S}:=\mathbf{QDA}^{-1}$. We will now use Theorem~\ref{thm:main} to prove the MacWilliams identity for the Hamming weight enumerator. From (\ref{eq:HWE-SWE}) and Theorem~\ref{thm:main},
\[W_{C^\perp}(x,y)=\swe_{C^\perp}(x,y,\dots,y)=\frac{1}{|C|}\swe_C(z_0,z_1,\dots,z_t),\]
where for any $i\in\{0,1,\dots,t\}$,
\begin{equation}\label{eq:HWE-proof}
    z_i=s_{i\hspace*{.3mm}0}\,x+\sum_{j=1}^t s_{ij}\,y\,.
\end{equation}
Observe that from (\ref{eq:Q-alt}) and (\ref{eq:mobius-1}),
\begin{equation}\label{eq:HWE-proof-1}
    s_{i\hspace*{.3mm}0}=\sum_{k=0}^t q_{ik}\;|a_k R|\;\mu(a_k R, a_0 R)=q_{i\hspace*{.3mm}0}\;|a_0 R|=1\,.
\end{equation}
Moreover, we can use Equations (\ref{eq:Q-alt}) and (\ref{eq:mobius-2}) to obtain
\begingroup
	\allowdisplaybreaks
\begin{align*}
    \sum_{j=0}^t s_{ij}&=\sum_{j=0}^t\sum_{k=0}^t q_{ik}\;|a_k R|\;\mu(a_k R, a_j R)\\
    &=\sum_{k=0}^t q_{ik}\;|a_k R|\sum_{j=0}^t\mu(a_k R, a_j R)\\
    &=q_{i\hspace*{.2mm}t}\;|a_t R|\\
    &=\begin{cases}
    |R|,&\text{ if }i=0\,;\\
    \;\;0\,,&\text{ otherwise}.
    \end{cases}
\end{align*}
\endgroup
Therefore, by (\ref{eq:HWE-proof-1}),
\begin{equation}\label{eq:HWE-proof-2}
    \sum_{j=1}^t s_{ij}=-s_{i\hspace*{.3mm}0}+\sum_{j=0}^t s_{ij}=\begin{cases}
    |R|-1,&\text{ if }i=0\,;\\
    \quad\;\,\,\,-1,&\text{ otherwise}.
    \end{cases}
\end{equation}
Substituting (\ref{eq:HWE-proof-1}) and (\ref{eq:HWE-proof-2}) to (\ref{eq:HWE-proof}) gives us
\[z_i=\begin{cases}
    x+(|R|-1)y,&\text{ if }i=0\,;\\
    \quad\quad\quad\;\,\,\,x-y,&\text{ otherwise.}
    \end{cases}\]
The desired MacWilliams identity (\ref{eq:MW-HWE}) follows from this.
\end{ex}

\section{Generalizations to supports and tuples}\label{sec:generalizations}
In this section, we generalize the symmetrized weight enumerator with respect to symmetrized supports and tuples of codewords, and prove three MacWilliams identities that generalize the MacWilliams identity by Wood \cite{Wood1999} (Theorem~\ref{thm:MW-Wood}). Most of the proofs will be very similar to the arguments in Section 3, so we will omit some of the more technical details. 

First, we define the symmetrized support enumerator of linear codes over finite commutative Frobenius rings. Let $C$ be a linear code over a finite commutative Frobenius ring $R$. Throughout this section, let
\[\mathbf{X}:=\begin{pmatrix}
    x_{01} &\cdots &x_{0n}\\
    x_{11} &\cdots &x_{1n}\\
    \vdots &\ddots &\vdots\\ 
    x_{t1} &\cdots &x_{tn}
\end{pmatrix} \qquad\text{and}\qquad
\mathbf{Y}:=\begin{pmatrix}
    y_{01} &\cdots &y_{0n}\\
    y_{11} &\cdots &y_{1n}\\
    \vdots &\ddots &\vdots\\ 
    y_{t1} &\cdots &y_{tn}
\end{pmatrix}\]
be matrices of indeterminates whose rows and columns are indexed by $\{0,1,\dots,t\}$ and $[n]$, respectively. 
The \emph{symmetrized support enumerator} of $C$ is
\begin{equation}\label{eq:SSE-multiset}
    \sse_C(\mathbf{X}):=\sum_{X\subseteq t\cdot[n]} A_C(X) \Big(\prod_{\ell\in M_0(X)} x_{0\ell}\prod_{\ell\in M_1(X)} x_{1\ell}\cdots \prod_{\ell\in M_t(X)} x_{t\ell}\Big)\,,
\end{equation}
where $A_C$ is as defined in (\ref{eq:AC}). This generalizes $\swe_C$ in (\ref{eq:SWE-multiset}) which is obtained from $\sse_C$ in~(\ref{eq:SSE-multiset}) by setting $x_{i1}=\cdots=x_{in}:=x_i$ for all $i=0,1,\dots,t$. In other words, $\sse_C$ gives more information than $\swe_C$ about the codewords of $C$.

As in Theorem~\ref{thm:id-SWE}, we can also replace $A_C$ in (\ref{eq:SSE-multiset}) by $B^{\mathcal{F}}_{C}$ and obtain a new expression in terms of $\sse_C$ as follows.
\begin{theorem}\label{thm:id-SSE}
    Let $C$ be a linear code of length $n$ over a finite commutative Frobenius ring~$R$, and let $\mathcal{F}:=(F_0,F_1,\dots,F_t)$, where $F_0,F_1,\dots,F_t$ are non-empty subsets of $\{0,1,\dots,t\}$. Then
    \begin{equation}\label{eq:id-SSE}
        \sse_C\big(\mathbf{P}^\mathcal{F}\mathbf{Y}\big)=\sum_{Y\subseteq t\cdot[n]} B^{\mathcal{F}}_{C}(Y)\Big(\prod_{\ell\in M_0(Y)} y_{0\ell}\prod_{\ell\in M_1(Y)} y_{1\ell}\cdots\prod_{\ell\in M_t(Y)} y_{t\ell}\Big)\,,
    \end{equation}
    where $\mathbf{P}^\mathcal{F}$ is the $(t+1)\times (t+1)$ matrix described in Theorem~\ref{thm:id-SWE}.
\end{theorem}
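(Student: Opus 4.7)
The plan is that this statement is a position-refined extension of Theorem~\ref{thm:id-SWE} in which the scalar indeterminates $y_j$ are replaced by indeterminates $y_{j\ell}$ carrying the extra position index $\ell\in[n]$. All of the combinatorial structure needed is already present in the proof of Theorem~\ref{thm:id-SWE}, so I would re-run that argument almost verbatim, replacing the scalar partition identity~(\ref{eq:id-part-set}) by the position-refined identity
\[
\sum_{(T_j\,\colon\,j\,\in\, V)\,\in\,\mathcal{P}_{|V|}(T)}\prod_{j\in V}\prod_{\ell\in T_j} u_{j\ell}=\prod_{\ell\in T}\Big(\sum_{j\in V} u_{j\ell}\Big),
\]
which follows immediately by distributivity: on the right, each $\ell\in T$ independently chooses the block $T_j$ it belongs to, and summing over these choices recovers the sum over ordered partitions on the left.

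With this tool, I would proceed as follows. Using the same binary relation $\leq$ on $t\cdot[n]$ from the proof of Theorem~\ref{thm:id-SWE} together with the identity $B^{\mathcal{F}}_C(Y)=\sum_{X\leq Y}A_C(X)$, swapping the order of summation rewrites the right-hand side of~(\ref{eq:id-SSE}) as
\[
\sum_{X\subseteq t\cdot[n]}A_C(X)\sum_{\substack{Y\subseteq t\cdot[n]\colon\\ X\leq Y}}\prod_{j=0}^t\prod_{\ell\in M_j(Y)}y_{j\ell}.
\]
For fixed $X$, I would parametrize the inner sum by the sets $W_{ij}:=M_i(X)\cap M_j(Y)$. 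Exactly as in the proof of Theorem~\ref{thm:id-SWE}, $X\leq Y$ forces $W_{ij}=\emptyset$ whenever $j\notin F_i$, so $(W_{ij}\,\colon\,j\in F_i)\in\mathcal{P}_{|F_i|}(M_i(X))$. Since the collection $\{W_{ij}\,\colon\, i\in\{0,\dots,t\},\,j\in F_i\}$ also partitions $M_j(Y)$ for each fixed $j$, the product factors as
\[
\prod_{j=0}^t\prod_{\ell\in M_j(Y)}y_{j\ell}=\prod_{i=0}^t\prod_{j\in F_i}\prod_{\ell\in W_{ij}}y_{j\ell}.
\]
The sum over $Y$ then splits into independent sums over $\mathcal{P}_{|F_i|}(M_i(X))$ for each $i$, and applying the refined partition identity coordinate by coordinate reduces each factor to
\[
\prod_{i=0}^t\prod_{\ell\in M_i(X)}\Big(\sum_{j=0}^t p^\mathcal{F}_{ij}\,y_{j\ell}\Big).
\]
Recognizing $\sum_{j=0}^t p^\mathcal{F}_{ij}\,y_{j\ell}$ as the $(i,\ell)$-entry of $\mathbf{P}^\mathcal{F}\mathbf{Y}$ and comparing with definition~(\ref{eq:SSE-multiset}) yields $\sse_C(\mathbf{P}^\mathcal{F}\mathbf{Y})$, as required.

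I do not foresee any substantive obstacle: the only genuine difference from Theorem~\ref{thm:id-SWE} is the extra position index, and this is absorbed uniformly by the position-refined partition identity. The only step requiring real care is the factorization of $\prod_{j}\prod_{\ell\in M_j(Y)}y_{j\ell}$ across the sets $W_{ij}$; this is legitimate precisely because, for each fixed $j$, the sets $W_{ij}$ with $i$ ranging over those indices for which $j\in F_i$ form a partition of $M_j(Y)$, which follows directly from the definition of $\leq$ and from $W_{ij}=\emptyset$ for $j\notin F_i$.
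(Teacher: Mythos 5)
Your proposal is correct and follows the same route as the paper: the paper's proof of Theorem~\ref{thm:id-SSE} simply notes that the scalar partition identity~(\ref{eq:id-part}) generalizes to the position-refined identity $\sum_{(T_1,\dots,T_m)\in\mathcal{P}_m(T)}\prod_{k}\prod_{\ell\in T_k}u_{k\ell}=\prod_{\ell\in T}(u_{1\ell}+\cdots+u_{m\ell})$ and that $M_j(Y)$ is the disjoint union of the sets $W_{ij}$, and then reruns the argument of Theorem~\ref{thm:id-SWE}. You have reproduced exactly this argument, just with the intermediate steps spelled out rather than deferred to Theorem~\ref{thm:id-SWE}'s proof.
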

\begin{proof}
    The proof is similar to Theorem~\ref{thm:id-SWE} but we use more general identities. In particular, Equation (\ref{eq:id-part}) becomes
    \[\sum_{(T_1,\dots,T_m)\in\mathcal{P}_m(T)} \Big(\prod_{\ell\in T_1} u_{1\ell}\cdots \prod_{\ell\in T_m} u_{m\ell}\Big)=\prod_{\ell\in T}(u_{1\ell}+\cdots+u_{m\ell})\]
    and (\ref{eq:proof2}) becomes
    \[M_j(Y)=\bigcup_{\substack{i\in\{0,1,\dots,t\}\colon\\j\in F_i}}\hspace*{-2mm}\!\! W_{ij}.\]
    The rest of the proof follows from these two more general identities.
\end{proof}
\noindent We hereby obtain the following MacWilliams identity for the symmetrised support enumerator. The proof is very similar to that of Theorem~\ref{thm:main}, but here we use Theorem~\ref{thm:id-SSE} instead of Theorem~\ref{thm:id-SWE}.
\begin{theorem}\label{thm:main-support}
    Let $C$ be a linear code of length $n$ over a finite commutative Frobenius ring~$R$. Then 
    \[\sse_{C^\perp}(\mathbf{X})=\frac{1}{|C|}\sse_C\big(\mathbf{QDA}^{-1}\mathbf{X}\big)\]
    for the $(t+1)\times (t+1)$ matrices $\mathbf{Q, D, A}$ described in Theorem~\ref{thm:main}.
\end{theorem}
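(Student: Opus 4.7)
The plan is to mirror the proof of Theorem~\ref{thm:main} step by step, replacing uses of Theorem~\ref{thm:id-SWE} by Theorem~\ref{thm:id-SSE} and treating the matrix $\mathbf{X}$ of indeterminates via ordinary matrix multiplication: for any $(t+1)\times(t+1)$ matrix $\mathbf{M}$, the substitution $\sse_C(\mathbf{M}\mathbf{X})$ means replacing each $x_{i\ell}$ by $\sum_{j=0}^t m_{ij}\,x_{j\ell}$. Crucially, Lemma~\ref{lem:id-SWE} is a statement purely about the coefficients $B^{\mathcal{I}}_{C^\perp}(Y)$ and $B^{\mathcal{J}}_C(Y)$, with no reference to the monomial structure, so it transfers verbatim to the SSE setting.

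First, I would apply Theorem~\ref{thm:id-SSE} to $C^\perp$ with $\mathcal{F}=\mathcal{I}$, the tuple from Lemma~\ref{lem:id-SWE}. By Proposition~\ref{prop:A} the corresponding matrix is $\mathbf{A}=\mathbf{P}^{\mathcal{I}}$, so this yields
\[
\sse_{C^\perp}(\mathbf{A}\mathbf{Y})=\sum_{Y\subseteq t\cdot[n]} B^{\mathcal{I}}_{C^\perp}(Y)\prod_{i=0}^t\prod_{\ell\in M_i(Y)} y_{i\ell}\,.
\]
Next, I would invoke Lemma~\ref{lem:id-SWE} to rewrite $B^{\mathcal{I}}_{C^\perp}(Y)=\tfrac{1}{|C|}\big(\prod_{i=0}^t |a_iR|^{|M_i(Y)|}\big)\,B^{\mathcal{J}}_C(Y)$. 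The key move, which differs in flavour from the SWE case, is that the scalar factor $|a_iR|^{|M_i(Y)|}=\prod_{\ell\in M_i(Y)}|a_iR|$ splits over $\ell$ and can be absorbed into the monomials. Concretely, setting $w_{i\ell}:=|a_iR|\,y_{i\ell}$, i.e.\ $\mathbf{W}=\mathbf{D}\mathbf{Y}$, the right-hand side above becomes
\[
\frac{1}{|C|}\sum_{Y\subseteq t\cdot[n]} B^{\mathcal{J}}_C(Y)\prod_{i=0}^t\prod_{\ell\in M_i(Y)} w_{i\ell}\,.
\]

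Finally, I would apply Theorem~\ref{thm:id-SSE} a second time, now to $C$ with $\mathcal{F}=\mathcal{J}$ and variables $\mathbf{W}$, to identify the last display with $\tfrac{1}{|C|}\sse_C(\mathbf{Q}\mathbf{W})=\tfrac{1}{|C|}\sse_C(\mathbf{QD}\mathbf{Y})$. Combining gives $\sse_{C^\perp}(\mathbf{A}\mathbf{Y})=\tfrac{1}{|C|}\sse_C(\mathbf{QD}\mathbf{Y})$, and since $\mathbf{A}$ is invertible (Proposition~\ref{prop:A}), substituting $\mathbf{Y}=\mathbf{A}^{-1}\mathbf{X}$ yields the stated identity.

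The only genuinely new ingredient compared with Theorem~\ref{thm:main} is the distributional step in which $\prod_i |a_iR|^{|M_i(Y)|}$ is spread over the coordinate-indexed product, which is what makes $\mathbf{D}\mathbf{Y}$ the right column-indexed substitution. I do not expect any serious obstacle: Lemma~\ref{lem:id-SWE} carries over unchanged, Theorem~\ref{thm:id-SSE} is already in hand, and the rest is bookkeeping of indices, essentially identical to the argument for Theorem~\ref{thm:main}.
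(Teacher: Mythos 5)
Your proposal is correct and follows essentially the same route as the paper, which explicitly says the proof of Theorem~\ref{thm:main-support} mirrors that of Theorem~\ref{thm:main} with Theorem~\ref{thm:id-SSE} replacing Theorem~\ref{thm:id-SWE}. You have correctly identified that Lemma~\ref{lem:id-SWE} applies unchanged (it concerns only the coefficients $B^{\mathcal{I}}_{C^\perp}$ and $B^{\mathcal{J}}_C$, not the monomials) and that the scalar factors $|a_iR|^{|M_i(Y)|}$ split as $\prod_{\ell\in M_i(Y)}|a_iR|$ to give the substitution $\mathbf{W}=\mathbf{DY}$, which is precisely the bookkeeping step the paper leaves implicit.
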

We now generalize $\sse_C$ further by considering tuples of codewords instead of single codewords, as in \cite{BrShWe15,shiromoto96}. However, we will restrict the focus to finite principal ideal rings~$R$. Let $C$ be a linear code of length $n$ over $R$, and let $\lambda$ be a positive integer, and let $\cv_m=(c_{m1},\dots,c_{mn})\in C$ for each $m\in\{1,\dots,\lambda\}$. For each integer $i\in\{0,1,\dots,t\}$, define
\[S^{[\lambda]}_i(\cv_1,\dots,\cv_\lambda):=\{\ell\in [n]\;\colon\;c_{1\ell} R+\cdots+c_{\lambda \ell} R=a_i R\}\,.\]
Here, $c_{1\ell} R+\cdots+c_{\lambda \ell} R$ is the ideal of $R$ generated by $c_{1\ell},\dots,c_{\lambda \ell}$. Since $R$ is a principal ideal ring, we know that $S^{[\lambda]}_0(\cv_1,\dots,\cv_\lambda)$, $S^{[\lambda]}_1(\cv_1,\dots,\cv_\lambda),\dots, S^{[\lambda]}_t(\cv_1,\dots,\cv_\lambda)$ are disjoint sets whose union is $[n]$. The \emph{$\lambda$-tuple symmetrized support enumerator} of $C$ is
\begin{equation}\label{eq:SSE-tuple-multiset}
    \sse^{[\lambda]}_C(\mathbf{X}):=\sum_{X\subseteq t\cdot[n]} A^{[\lambda]}_C(X) \Big(\prod_{\ell\in M_0(X)} x_{0\ell}\prod_{\ell\in M_1(X)} x_{1\ell}\cdots \prod_{\ell\in M_t(X)} x_{t\ell}\Big),
\end{equation}
where for any multiset $X\subseteq t\cdot[n]$,
\[A^{[\lambda]}_C(X):=|\{(\cv_1,\dots,\cv_\lambda)\in C^\lambda\;\colon\; S^{[\lambda]}_i(\cv_1,\dots,\cv_\lambda)=M_i(X) \text{ for all }i=0,1,\dots,t\}|\,.\] 
This generalizes $\sse_C$ in (\ref{eq:SSE-multiset}) which is a special case of $\sse^{[\lambda]}_C$ in (\ref{eq:SSE-tuple-multiset}) when $\lambda=1$.

In this generalization, we can replace $A^{[\lambda]}_C$ in (\ref{eq:SSE-tuple-multiset}) by $(B^{\mathcal{I}}_{C})^\lambda$, where $\mathcal{I}$ is as defined in Lemma~\ref{lem:id-SWE}, and obtain a new expression in terms of $\sse^{[\lambda]}_C$. However, unlike Theorems~\ref{thm:id-SWE} and \ref{thm:id-SSE}, this is not always true for other choices of $\mathcal{F}$. 
\begin{theorem}\label{thm:id-SSE-tuple}
    Let $C$ be a linear code of length $n$ over a finite commutative principal ideal ring~$R$, and let $\{a_0,a_1,\dots,a_t\}$ be the set of representatives described in Subsection~\ref{subsection:swe}. Let $\mathcal{I}:=(I_0,I_1,\dots,I_t)$ be a tuple of subsets of $\{0,1,\dots,t\}$, where for each $i\in\{0,1,\dots,t\}$,
    \begin{align*}
    I_i&:=\{j\in\{0,1,\dots,t\}\;\colon\;a_j a_k\neq 0\text{ for all }k\text{ such that }a_ia_k\neq 0\}\,.
    \end{align*}
    Then for any positive integer $\lambda$,
    \begin{equation*}\label{eq:id-SSE-tuple}
        \sse^{[\lambda]}_C(\mathbf{A}\mathbf{Y})=\sum_{Y\subseteq t\cdot[n]}\!\big(B^{\mathcal{I}}_{C}(Y)\big)^\lambda\Big(\prod_{\ell\in M_0(Y)} y_{0\ell}\prod_{\ell\in M_1(Y)} y_{1\ell}\cdots\prod_{\ell\in M_t(Y)} y_{t\ell}\Big)\,,
    \end{equation*}
    where $\mathbf{A}$ is the adjacency matrix of the poset of principal ideals of $R$ under set inclusion. 
\end{theorem}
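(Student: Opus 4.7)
The plan is to adapt the strategy used in the proofs of Theorems~\ref{thm:id-SWE} and \ref{thm:id-SSE}, with the central combinatorial identity upgraded from the $\lambda=1$ case to
\[
\bigl(B^{\mathcal{I}}_C(Y)\bigr)^\lambda = \sum_{\substack{X \subseteq t \cdot [n] \\ X \leq Y}} A^{[\lambda]}_C(X),
\]
where I write $X \leq Y$ to mean $M_i(X) \subseteq \bigcup_{j \in I_i} M_j(Y)$ for every $i$, exactly as in the proof of Theorem~\ref{thm:id-SWE}. Once this identity is in hand, the rest is bookkeeping: I would swap the order of summation on the right-hand side of the desired equation, and for fixed $X$ observe that the multisets $Y$ with $X \leq Y$ are in bijection with assignments $\ell \mapsto m_Y(\ell) \in I_{m_X(\ell)}$, one for each $\ell \in [n]$. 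The resulting inner sum therefore factorizes coordinate-wise as $\prod_{\ell=1}^n \sum_{j \in I_{m_X(\ell)}} y_{j\ell} = \prod_{i=0}^t \prod_{\ell \in M_i(X)} (\mathbf{A}\mathbf{Y})_{i\ell}$, using $(\mathbf{A}\mathbf{Y})_{i\ell} = \sum_{j\in I_i} y_{j\ell}$ from Proposition~\ref{prop:A}. Comparing with \eqref{eq:SSE-tuple-multiset} then identifies the expression with $\sse^{[\lambda]}_C(\mathbf{A}\mathbf{Y})$.

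To prove the key identity, I would first invoke Proposition~\ref{prop:A} to reinterpret $j \in I_i$ as $a_i R \subseteq a_j R$. Under this rephrasing, the condition $S_i(\cv) \subseteq \bigcup_{j \in I_i} M_j(Y)$ for all $i$ becomes the single coordinate-wise condition $c_\ell R \subseteq a_{m_Y(\ell)} R$ for every $\ell \in [n]$, so $(B^{\mathcal{I}}_C(Y))^\lambda$ counts $\lambda$-tuples $(\cv_1,\dots,\cv_\lambda) \in C^\lambda$ for which every component $\cv_m$ satisfies this condition at every coordinate. Meanwhile, $\sum_{X \leq Y} A^{[\lambda]}_C(X)$ counts tuples $(\cv_1,\dots,\cv_\lambda) \in C^\lambda$ with $S^{[\lambda]}_i(\cv_1,\dots,\cv_\lambda) \subseteq \bigcup_{j \in I_i} M_j(Y)$ for all $i$, which by the same rephrasing is equivalent to $\sum_{m=1}^\lambda c_{m\ell} R \subseteq a_{m_Y(\ell)} R$ for every $\ell$. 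Since containment of a sum of ideals in a fixed ideal is equivalent to containment of each summand, the two conditions coincide and the identity follows.

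The main obstacle is precisely this key identity and understanding why the theorem is stated only for $\mathcal{I}$ rather than a general $\mathcal{F}$. The subtle point is that the sets $I_i$ encode a downward-closed condition in the poset of principal ideals (namely $a_iR \subseteq a_jR$), which is exactly the property that interacts well with sums of ideals coordinate-wise; for an arbitrary $\mathcal{F}$ there is no such interpretation and the equivalence above breaks down, so the power $(B^{\mathcal{F}}_C(Y))^\lambda$ no longer matches the tuple count. The principal ideal ring hypothesis on $R$ enters precisely to ensure that $\sum_{m=1}^\lambda c_{m\ell}R$ is itself principal, so that $S^{[\lambda]}_i$ is well defined and the enumerator $\sse^{[\lambda]}_C$ makes sense to begin with.
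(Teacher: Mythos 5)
Your proof is correct and follows the same overall strategy as the paper: both establish the key identity $\big(B^{\mathcal{I}}_C(Y)\big)^\lambda = \sum_{X\leq Y}A^{[\lambda]}_C(X)$ (Equation~(\ref{eq:id-BICL}) in the paper) and then factor the resulting double sum coordinate-wise, identifying the inner sum with $\prod_{i=0}^t\prod_{\ell\in M_i(X)}(\mathbf{A}\mathbf{Y})_{i\ell}$ via Proposition~\ref{prop:A}. Where you genuinely diverge is in how the key identity is proved. The paper routes the argument through the support-based condition $\bigcup_{m=1}^\lambda\supp(a_k\cv_m)\subseteq\bigcup_{j:\,a_ja_k\neq 0}M_j(Y)$ for all $k$ (the tuple analogue of condition (A1) from the proof of Lemma~\ref{lem:id-SWE}) and then proves a set equality by chasing inclusions through the poset. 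You instead use Proposition~\ref{prop:A} once to rewrite $j\in I_i$ as $a_iR\subseteq a_jR$, collapse the $t+1$ conditions $S_i(\cv)\subseteq\bigcup_{j\in I_i}M_j(Y)$ into the single coordinate-wise statement $c_\ell R\subseteq a_{m_Y(\ell)}R$ for all $\ell$, do the same for the tuple version with $\sum_{m}c_{m\ell}R$ in place of $c_\ell R$, and then invoke the elementary fact that a finite sum of ideals lies in a fixed ideal iff each summand does. This is shorter and, I think, more transparent than the paper's support-chasing argument, and it makes the role of the PIR hypothesis (ensuring $\sum_m c_{m\ell}R$ is principal, so $S^{[\lambda]}_i$ is defined) and the reason the theorem is stated only for $\mathcal{F}=\mathcal{I}$ crystal clear. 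One small note: $I_i$ is the set of $j$ with $a_iR\subseteq a_jR$, so it is the up-set of $a_iR$ in the poset rather than a ``downward-closed'' set as you phrase it, but this does not affect the substance of your argument.
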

\begin{proof}
    The proof is similar to that of Theorem~\ref{thm:id-SSE} when $\mathcal{F}=\mathcal{I}$, but here we use 
    \begin{equation}\label{eq:id-BICL}
        \big(B^{\mathcal{I}}_{C}(Y)\big)^\lambda=\sum_{X\leq Y}A^{[\lambda]}_C(X)
    \end{equation}
    and the fact that $\mathbf{A}:=\mathbf{P}^{\mathcal{I}}$ is the adjacency matrix of the poset of principal ideals of $R$ under set inclusion; see Proposition~\ref{prop:A}. We will only prove that (\ref{eq:id-BICL}) holds and the rest of the proof follows. From the equivalence of (A1) and (A2) in the proof of Lemma~\ref{lem:id-SWE}, we know that for any multiset $Y\subseteq t\cdot[n]$,
    \[B^{\mathcal{I}}_C(Y)=\Big|\Big\{\cv\in C\;\colon\;\supp(a_k\cv)\subseteq\bigcup_{\substack{j\in\{0,1,\dots,t\}\colon\\a_j a_k\neq 0}}\hspace*{-2mm}M_j(Y)\,\text{ for all }k=0,1,\dots,t\Big\}\Big|\,,\]
    and thus
    \[\big(B^{\mathcal{I}}_C(Y)\big)^\lambda=\Big|\Big\{(\cv_1,\dots,\cv_\lambda)\in C^\lambda\,\colon\hspace*{-1mm}\bigcup_{m=1}^\lambda\supp(a_k\cv_m)\subseteq\hspace*{-.5mm}\bigcup_{\substack{j\in\{0,1,\dots,t\}\colon\\a_j a_k\neq 0}}\hspace*{-2mm}\!\! M_j(Y)\,\text{ for all }k=0,1,\dots,t\Big\}\Big|\,.\]
    Let $\leq$ be the binary relation on $t\cdot[n]$ defined in the proof of Theorem~\ref{thm:id-SWE}. We can then prove (\ref{eq:id-BICL}) by showing that
    \begin{align}\label{eq:proof-BICL}
        &\bigcup_{X\leq Y}\{(\cv_1,\dots,\cv_\lambda)\in C^\lambda\;\colon\; S^{[\lambda]}_i(\cv_1,\dots,\cv_\lambda)=M_i(X) \text{ for all }i=0,1,\dots,t\}\notag\\
        &=\Big\{(\cv_1,\dots,\cv_\lambda)\in C^\lambda\,\colon\hspace*{-1mm}\bigcup_{m=1}^\lambda\supp(a_k\cv_m)\subseteq\hspace*{-.5mm}\bigcup_{\substack{j\in\{0,1,\dots,t\}\colon\\a_j a_k\neq 0}}\hspace*{-2mm}\!\! M_j(Y)\,\text{ for all }k=0,1,\dots,t\Big\}\,.
    \end{align}
    Consider any multiset $X\leq Y$ and $(\cv_1,\dots,\cv_\lambda)\in C^\lambda$ such that \[S^{[\lambda]}_j(\cv_1,\dots,\cv_\lambda)=M_j(X)\subseteq \displaystyle\bigcup_{k\in I_j}M_k(Y)\] for all $j=0,1,\dots,t$. For any $i\in\{0,1,\dots,t\}$ and $m\in\{1,\dots,\lambda\}$, it is not hard to show that
    \[S_i(\cv_m)\,\subseteq\,\bigcup_{\substack{j\in\{0,1,\dots,t\}\colon\\ a_i R\subseteq a_j R\\}}\hspace*{-2mm}\!\! S^{[\lambda]}_j(\cv_1,\dots,\cv_\lambda)\,=\,\bigcup_{j\in I_i} S^{[\lambda]}_j(\cv_1,\dots,\cv_\lambda)\,\subseteq\,\bigcup_{j\in I_i}\bigcup_{k\in I_j} M_k(Y)\,=\,\bigcup_{j\in I_i}M_j(Y).\]
    The equivalence of (A1) and (A2) in the proof of Lemma~\ref{lem:id-SWE} implies that $(\cv_1,\dots,\cv_\lambda)$ is also an element of the right-hand set in (\ref{eq:proof-BICL}). Now consider the converse: let $(\cv_1,\dots,\cv_\lambda)\in C^\lambda$ be an element of the right-hand set in (\ref{eq:proof-BICL}). One can show that for any $i\in\{0,1,\dots,t\}$,
    \[S^{[\lambda]}_i(\cv_1,\dots,\cv_\lambda)\subseteq \bigcap_{\substack{k\in\{0,1,\dots,t\}\colon\\a_i a_k\neq 0}}\bigcup_{m=1}^\lambda \supp(a_k\cv_m)\subseteq \bigcap_{\substack{k\in\{0,1,\dots,t\}\colon\\a_i a_k\neq 0}}\bigcup_{\substack{j\in\{0,1,\dots,t\}\colon\\a_j a_k\neq 0}}\hspace*{-2mm}\!\! M_j(Y)=\bigcup_{j\in I_i}M_j(Y)\]
    and this completes the proof of (\ref{eq:proof-BICL}). Therefore, (\ref{eq:id-BICL}) holds.
\end{proof}
We obtain the MacWilliams identity for the $\lambda$-tuple symmetrized support weight enumerator as follows.
\begin{theorem}\label{thm:main-tuple}
    Let $C$ be a linear code of length $n$ over a finite commutative principal ideal ring $R$. Then 
    \[\sse^{[\lambda]}_{C^\perp}(\mathbf{X})=\frac{1}{|C|^\lambda}\sse^{[\lambda]}_C\big(\mathbf{QD}^\lambda \mathbf{A}^{-1}\mathbf{X}\big)\]
    for the $(t+1)\times (t+1)$ matrices $\mathbf{Q, D, A}$ described in Theorem~\ref{thm:main}. Consequently,
    \begin{equation}\label{eq:matrix-involution}
        \big(\mathbf{QD}^\lambda \mathbf{A}^{-1}\big)^2=|R|^\lambda\;\mathbf{I},
    \end{equation}
    where $\mathbf{I}$ is the $(t+1)\times (t+1)$ identity matrix.
\end{theorem}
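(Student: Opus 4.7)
The plan is to mirror the proof of Theorem~\ref{thm:main} with Theorem~\ref{thm:id-SSE-tuple} playing the role of Theorem~\ref{thm:id-SWE}. First I would apply Theorem~\ref{thm:id-SSE-tuple} to $C^\perp$ with $\mathcal{F}=\mathcal{I}$ to obtain
\[\sse^{[\lambda]}_{C^\perp}(\mathbf{A}\mathbf{Y})=\sum_{Y\subseteq t\cdot[n]}\big(B^{\mathcal{I}}_{C^\perp}(Y)\big)^\lambda\Big(\prod_{\ell\in M_0(Y)}y_{0\ell}\cdots\prod_{\ell\in M_t(Y)}y_{t\ell}\Big).\]
Raising the identity in Lemma~\ref{lem:id-SWE} to the $\lambda$-th power gives $(B^{\mathcal{I}}_{C^\perp}(Y))^\lambda=|C|^{-\lambda}\prod_i|a_iR|^{\lambda|M_i(Y)|}(B^{\mathcal{J}}_C(Y))^\lambda$. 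Substituting this and absorbing the factors $|a_jR|^\lambda$ into the variables via the change of variables $\mathbf{W}:=\mathbf{D}^\lambda\mathbf{Y}$ rewrites the right-hand side as $|C|^{-\lambda}\sum_Y(B^{\mathcal{J}}_C(Y))^\lambda\prod_j\prod_{\ell\in M_j(Y)}w_{j\ell}$.

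The main obstacle is closing the loop, which requires a $\mathcal{J}$-analogue of Theorem~\ref{thm:id-SSE-tuple}, namely
\[\sse^{[\lambda]}_C(\mathbf{Q}\mathbf{W})=\sum_{Y\subseteq t\cdot[n]}\big(B^{\mathcal{J}}_C(Y)\big)^\lambda\Big(\prod_{\ell\in M_0(Y)}w_{0\ell}\cdots\prod_{\ell\in M_t(Y)}w_{t\ell}\Big).\]
The paragraph preceding Theorem~\ref{thm:id-SSE-tuple} warns that the underlying identity $(B^{\mathcal{F}}_C(Y))^\lambda=\sum_{X\leq Y}A^{[\lambda]}_C(X)$ can fail for arbitrary $\mathcal{F}$, so I would verify it directly for $\mathcal{F}=\mathcal{J}$ by adapting the argument in~\eqref{eq:proof-BICL}. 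Since $R$ is a principal ideal ring, each sum $c_{1\ell}R+\cdots+c_{\lambda\ell}R$ is principal, so the sets $S^{[\lambda]}_i(\cv_1,\dots,\cv_\lambda)$ partition $[n]$; one then shows that a tuple $(\cv_1,\dots,\cv_\lambda)\in C^\lambda$ satisfies $\bigcup_m\supp(a_k\cv_m)\subseteq\bigcup_{j:\,a_kR\not\subseteq a_jR}M_j(Y)$ for all $k$ if and only if $S^{[\lambda]}_i(\cv_1,\dots,\cv_\lambda)\subseteq\bigcup_{j\in J_i}M_j(Y)$ for all $i$. The equivalence pivots on the set identity $\bigcap_{k:\,a_ia_k\neq 0}\bigcup_{j:\,a_kR\not\subseteq a_jR}M_j(Y)=\bigcup_{j\in J_i}M_j(Y)$, which reduces to the definition of $J_i$ together with the fact that each $\ell\in[n]$ lies in a unique $M_{j^*}(Y)$. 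With the $\mathcal{J}$-analogue in hand, combining the three identities and substituting $\mathbf{Y}=\mathbf{A}^{-1}\mathbf{X}$ (using that $\mathbf{A}$ is invertible by Proposition~\ref{prop:A}) yields the claimed MacWilliams identity.

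For the consequence~\eqref{eq:matrix-involution}, set $\mathbf{M}:=\mathbf{Q}\mathbf{D}^\lambda\mathbf{A}^{-1}$. Applying the MacWilliams identity once to $C^\perp$ and invoking Lemma~\ref{lem:AA} gives $\sse^{[\lambda]}_C(\mathbf{X})=|C^\perp|^{-\lambda}\sse^{[\lambda]}_{C^\perp}(\mathbf{M}\mathbf{X})$, and composing with the identity for $C$ produces $\sse^{[\lambda]}_C(\mathbf{M}^2\mathbf{X})=(|C||C^\perp|)^\lambda\sse^{[\lambda]}_C(\mathbf{X})=|R|^{n\lambda}\sse^{[\lambda]}_C(\mathbf{X})$ by Lemma~\ref{lem:Wood}. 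Specialising to $n=1$ and running the identity through the length-one codes $C=a_iR$ for each $i\in\{0,1,\dots,t\}$, the enumerator $\sse^{[\lambda]}_{a_iR}(\mathbf{X})$ is the linear form $\sum_j\gamma_{ij}x_{j1}$, where $\gamma_{ij}$ counts $\lambda$-tuples in $(a_iR)^\lambda$ generating $a_jR$. The matrix $\Gamma:=(\gamma_{ij})$ vanishes unless $a_jR\subseteq a_iR$ and has nonzero diagonal entries (witnessed by $(a_i,0,\dots,0)$), so under any linear extension of the ideal poset it is triangular with nonzero diagonal, hence invertible. Matching coefficients of $x_{k1}$ yields $\Gamma\mathbf{M}^2=|R|^\lambda\Gamma$, which forces $\mathbf{M}^2=|R|^\lambda\mathbf{I}$ and completes the proof.
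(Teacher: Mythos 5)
Your proof is correct, but it follows a genuinely different route from the paper's at the key step.

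The paper also starts by applying Theorem~\ref{thm:id-SSE-tuple} to $C^\perp$ and raising Lemma~\ref{lem:id-SWE} to the $\lambda$-th power, but to ``close the loop'' the paper does \emph{not} prove a $\mathcal{J}$-version of Theorem~\ref{thm:id-SSE-tuple}. Instead it exploits the principal-ideal-ring structure via the involution $\phi$ on indices satisfying $(a_jR)^\perp=a_{\phi(j)}R$: setting $\Phi$ to be the induced involution on submultisets of $t\cdot[n]$, one has $B^{\mathcal{J}}_C(Y)=B^{\mathcal{I}}_C(\Phi(Y))$, so after re-indexing the sum by $\Phi$ the $\mathcal{I}$-version of Theorem~\ref{thm:id-SSE-tuple} can be applied again, and $\mathbf{Q}=\mathbf{AP}$ (Corollary~\ref{cor:FPIR}) converts the resulting $\mathbf{APD}^\lambda\mathbf{A}^{-1}$ into $\mathbf{QD}^\lambda\mathbf{A}^{-1}$. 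You instead prove the $\mathcal{J}$-analogue $(B^{\mathcal{J}}_C(Y))^\lambda=\sum_{X\leq Y}A^{[\lambda]}_C(X)$ directly, via the identity $\bigcup_m\supp(a_k\cv_m)=\bigcup_{i:\,a_ka_i\neq 0}S^{[\lambda]}_i(\cv_1,\dots,\cv_\lambda)$ (valid since each $c_{1\ell}R+\cdots+c_{\lambda\ell}R$ is principal) and the Möbius-style intersection argument; one also needs the monotonicity $a_iR\subseteq a_{i'}R\Rightarrow J_{i'}\subseteq J_i$, which follows from $J_i=\{j:a_jR\subseteq(a_iR)^\perp\}$ and Corollary~\ref{cor:dual}, to get the reverse implication. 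This is a correct and arguably more symmetric argument: it shows that Theorem~\ref{thm:id-SSE-tuple} actually holds for $\mathcal{F}=\mathcal{J}$ too over PIRs, which the paper does not state. The paper's route avoids a second tuple-identity at the cost of introducing the involution machinery.

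For the consequence~\eqref{eq:matrix-involution}, the paper only cites Lemmas~\ref{lem:Wood} and~\ref{lem:AA} after composing the MacWilliams identity for $C$ and $C^\perp$. Your argument supplies the missing detail of how to extract the matrix identity from the polynomial identity: specialise to $n=1$, run over $C=a_iR$, observe that the coefficient matrix $\Gamma=(\gamma_{ij})$ with $\gamma_{ij}=|\{(c_1,\dots,c_\lambda)\in(a_iR)^\lambda:c_1R+\cdots+c_\lambda R=a_jR\}|$ is triangular with nonzero diagonal (hence invertible), and cancel $\Gamma$ from $\Gamma\mathbf{M}^2=|R|^\lambda\Gamma$. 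This is a clean and complete derivation.
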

\begin{proof}
    Applying Theorem~\ref{thm:id-SSE-tuple} to $C^\perp$ gives us
    \begin{equation}\label{eq:tuple-proof-1}
        \sse^{[\lambda]}_{C^\perp}(\mathbf{X})=\sum_{Y\subseteq t\cdot[n]} \big(B^{\mathcal{I}}_{C^\perp}(Y)\big)^\lambda\prod_{j=0}^t \Big(\prod_{\ell\in M_j(Y)} y_{j\ell}\Big)\,,
    \end{equation}
    where $\mathbf{X}=\mathbf{AY}$ and $\mathbf{A}$ is the adjacency matrix of the poset of principal ideals of $R$ under set inclusion. Therefore, $\mathbf{Y}=\mathbf{A}^{-1}\mathbf{X}$. Since $R$ is a principal ideal ring, there exists an involution~$\phi$ on $\{0,1,\dots,t\}$ such that $(a_j R)^\perp=a_{\phi(j)}R$ for all $j=0,1,\dots,t$. From the statements (a) and (d) in the proof of Theorem~\ref{thm:main} and statements (a) and (d) in the proof of Proposition~\ref{prop:A}, we deduce that 
    \[J_i=\{j\in\{0,1,\dots,t\}\;\colon\;a_j R\subseteq (a_i R)^\perp\}=\{\phi(j)\;\colon\;j\in I_i\}\]
    for any $i\in\{0,1,\dots,t\}$. Define the function $\Phi$ on the (multiset) power set of $t\cdot[n]$ such that for each  multiset $X\subseteq t\cdot[n]$, $\Phi(X)\subseteq t\cdot[n]$ is the multiset with $M_i(\Phi(X))=M_{\phi(i)}(X)$ for all $i\in\{0,1,\dots,t\}$. Note that $\Phi$ is an involution. Then for any multiset $Y\subseteq t\cdot[n]$, one can see that
    \begin{equation}\label{eq:tuple-proof-2}
        B^{\mathcal{J}}_C(Y)=B^{\mathcal{I}}_C(\Phi(Y))\,.
    \end{equation}
    From Lemma~\ref{lem:id-SWE} and (\ref{eq:tuple-proof-2}),
    \begingroup
	\allowdisplaybreaks
    \begin{align*}
        \sum_{Y\subseteq t\cdot[n]} \big(B^{\mathcal{I}}_{C^\perp}(Y)\big)^\lambda\prod_{j=0}^t \Big(\prod_{\ell\in M_j(Y)} y_{j\ell}\Big)
        &=\sum_{Y\subseteq t\cdot[n]} \frac{\prod_{i=0}^t|a_iR|^{\,\lambda\,|M_i(Y)|}}{|C|^\lambda}B^{\mathcal{J}}_C(Y)\prod_{j=0}^t \Big(\prod_{\ell\in M_j(Y)} y_{j\ell}\Big)\\
        &=\frac{1}{|C|^\lambda}\sum_{Y\subseteq t\cdot[n]}B^{\mathcal{J}}_C(Y)\prod_{j=0}^t \Big(\prod_{\ell\in M_j(Y)} |a_j R|^\lambda y_{j\ell}\Big)\\
        &=\frac{1}{|C|^\lambda}\sum_{Y\subseteq t\cdot[n]}B^{\mathcal{I}}_C(\Phi(Y))\prod_{j=0}^t \Big(\prod_{\ell\in M_j(Y)} |a_j R|^\lambda y_{j\ell}\Big)\\
        &=\frac{1}{|C|^\lambda}\sum_{Y\subseteq t\cdot[n]}B^{\mathcal{I}}_C(Y)\prod_{j=0}^t\Big(\prod_{\ell\in M_j(Y)} w_{j\ell}\Big),
    \end{align*}
    \endgroup
    where $\mathbf{W}=(w_{j\ell})$ is the matrix such that  $\mathbf{W}=\mathbf{P}\mathbf{D}^\lambda\mathbf{Y}=\mathbf{PD}^\lambda\mathbf{A}^{-1}\mathbf{X}$, and where $\mathbf{P}$ is the permutation matrix with respect to $\phi^{-1}=\phi$. Combine the above equation with (\ref{eq:tuple-proof-1}) and then apply Theorem~\ref{thm:id-SSE-tuple} again to obtain
    \[\sse^{[\lambda]}_{C^\perp}(\mathbf{X})=\frac{1}{|C|^\lambda}\sse^{[\lambda]}_C(\mathbf{AW})\,.\]
    Here, $\mathbf{AW}=\mathbf{APD}^\lambda \mathbf{A}^{-1} \mathbf{X}=\mathbf{QD}^\lambda \mathbf{A}^{-1} \mathbf{X}$ since $\mathbf{Q}=\mathbf{AP}$ from Corollary~\ref{cor:FPIR}. Equation (\ref{eq:matrix-involution}) can be obtained from Lemmas~\ref{lem:Wood} and \ref{lem:AA} after applying the MacWilliams identity in this theorem to $C$ and $C^\perp$, respectively.
\end{proof}

\begin{rem}
    The MacWilliams identity in Theorem~\ref{thm:main-tuple} does not hold for general finite Frobenius rings since  (\ref{eq:matrix-involution}) might not hold. For instance, the ring $R:=\FF_2[u,v]/\langle u^2,v^2\rangle$ from Example~\ref{ex:F2[u,v]} satisfies $\big(\mathbf{QD A}^{-1}\big)^2=|R|\;\mathbf{I}$ but $\big(\mathbf{QD}^2\mathbf{A}^{-1}\big)^2\neq |R|^2\;\mathbf{I}$.
\end{rem}

\noindent For finite chain rings, we can find the explicit form of the transformation matrix $\mathbf{QD}^\lambda\mathbf{A}^{-1}$. We refer to Subsection \ref{subsection:FCR} for the definitions of $q$ and $\nu$.
\begin{cor}\label{cor:FCR-tuple}
    Let $C$ be a linear code over a finite commutative chain ring $R$.
    Then for any positive integer $\lambda$,
    \[
    \sse^{[\lambda]}_{C^\perp}(\mathbf{X})=\frac{1}{|C|^\lambda}\sse^{[\lambda]}_C\big(\mathbf{S}^{[\lambda]}\mathbf{X}\big),
    \]
    where
    \[\mathbf{S}^{[\lambda]}=\mathbf{QD}^\lambda\mathbf{A}^{-1}=\begin{pmatrix}
    1 & q^\lambda-1 & q^{2\lambda}-q^\lambda &\cdots &q^{\nu\lambda}-q^{(\nu-1)\lambda} \\
    1 & q^\lambda-1 & q^{2\lambda}-q^\lambda &\cdots &-q^{(\nu-1)\lambda}\\
    \vdots &\vdots &\vdots &\iddots  &\vdots\\
    1 & q^\lambda-1 &-q^\lambda &\cdots &0\\
    1 &-1 &0 &\cdots &0
\end{pmatrix}.\]
\end{cor}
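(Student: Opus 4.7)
The approach is to apply Theorem~\ref{thm:main-tuple} directly and then carry out the resulting matrix computation using the explicit forms of $\mathbf{A}$, $\mathbf{D}$, and $\mathbf{Q}$ already derived in Subsection~\ref{subsection:FCR}. Since every finite commutative chain ring is in particular a finite commutative principal ideal ring (every ideal is of the form $\gamma^j R$), Theorem~\ref{thm:main-tuple} applies and immediately yields
\[\sse^{[\lambda]}_{C^\perp}(\mathbf{X}) = \frac{1}{|C|^\lambda}\sse^{[\lambda]}_C\big(\mathbf{QD}^\lambda \mathbf{A}^{-1} \mathbf{X}\big),\]
so the only remaining task is to identify $\mathbf{S}^{[\lambda]} := \mathbf{QD}^\lambda \mathbf{A}^{-1}$ with the explicit form in the statement.

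Next, I would use the explicit matrices from Subsection~\ref{subsection:FCR} corresponding to the ordering $a_0 = \gamma^\nu, a_1 = \gamma^{\nu-1}, \dots, a_\nu = 1$. Since $\mathbf{D}$ is the diagonal matrix $\mathrm{diag}(1, q, q^2, \dots, q^\nu)$, we get $\mathbf{D}^\lambda = \mathrm{diag}(1, q^\lambda, q^{2\lambda}, \dots, q^{\nu\lambda})$ with no work. Moreover, the inverse of the upper triangular all-ones matrix $\mathbf{A}$ is the well-known bidiagonal matrix with $1$ on the main diagonal and $-1$ on the first superdiagonal, and zeros elsewhere. Hence the $(k,j)$-entry of $\mathbf{D}^\lambda \mathbf{A}^{-1}$ equals $q^{k\lambda}$ when $k = j$, equals $-q^{k\lambda}$ when $k = j-1$, and is zero otherwise.

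Finally, I would expand
\[\mathbf{S}^{[\lambda]}_{ij} = \sum_{k=0}^{\nu} \mathbf{Q}_{ik}\,(\mathbf{D}^\lambda \mathbf{A}^{-1})_{kj},\]
using that $\mathbf{Q}_{ik} = 1$ precisely when $i + k \le \nu$. Splitting into the four cases $j = 0$, $1 \le j \le \nu - i$, $j = \nu + 1 - i$, and $j > \nu + 1 - i$, each sum collapses to the claimed value; in effect the computation amounts to substituting $q \mapsto q^\lambda$ in the matrix already displayed in Subsection~\ref{subsection:FCR} for the case $\lambda = 1$. There is essentially no serious obstacle here; the only points worth checking carefully are the boundary cases $j = 0$ (where only the $k = j$ term of $\mathbf{D}^\lambda\mathbf{A}^{-1}$ survives) and $j = \nu + 1 - i$ (where the $k = j$ term is killed by $\mathbf{Q}_{ij} = 0$ but the $k = j-1$ term survives), producing the asymmetric boundary entries $1$ and $-q^{(\nu-i)\lambda}$ respectively.
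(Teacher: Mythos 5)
Your proposal is correct and matches the paper's (implicit) proof: the paper also treats this corollary as an immediate consequence of Theorem~\ref{thm:main-tuple}, applicable because a chain ring is a principal ideal ring, combined with the explicit $\mathbf{A}$, $\mathbf{D}$, $\mathbf{Q}$ already exhibited in Subsection~\ref{subsection:FCR}. Your case analysis of $\mathbf{S}^{[\lambda]}_{ij} = \mathbf{Q}_{ij}\,q^{j\lambda} - \mathbf{Q}_{i,j-1}\,q^{(j-1)\lambda}$ (using $\mathbf{Q}_{ik}=1$ iff $i+k\le\nu$) correctly recovers every entry, including the boundary entries $1$ at $j=0$ and $-q^{(\nu-i)\lambda}$ at $j=\nu+1-i$.
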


Now define the \emph{$\lambda$-tuple symmetrized weight enumerator} of $C$ as
\begin{equation}\label{eq:SWE-tuple}
    \swe^{[\lambda]}_C(\xv)=\swe^{[\lambda]}_C(x_0,x_1,\dots,x_t)
    :=\sum_{(\cv_1,\dots,\cv_\lambda)\in C^\lambda}\!\! x_0^{|S^{[\lambda]}_0(\cv_1,\dots,\cv_\lambda)|}x_1^{|S^{[\lambda]}_1(\cv_1,\dots,\cv_\lambda)|}\cdots x_t^{|S^{[\lambda]}_t(\cv_1,\dots,\cv_\lambda)|}\,.
\end{equation}
\noindent This enumerates the codeword $\lambda$-tuples based on their number of coordinates that generate each principal ideal of $R$. Note that $\swe^{[\lambda]}_C$ in (\ref{eq:SWE-tuple}) can be obtained from $\sse^{[\lambda]}_C$ in (\ref{eq:SSE-tuple-multiset}) by setting $x_{i1}=\cdots=x_{in}:=x_i$ for all $i=0,1,\dots,t$. The following MacWilliams identity therefore holds for the $\lambda$-tuple symmetrized weight enumerator by Theorem~\ref{thm:main-tuple}.
\begin{cor}\label{cor:SWE-tuple}
    Let $C$ be a linear code of length $n$ over a finite commutative principal ideal ring~$R$. Then 
    \[\swe^{[\lambda]}_{C^\perp}(\xv)=\frac{1}{|C|^\lambda}\swe^{[\lambda]}_C\big(\mathbf{QD}^\lambda \mathbf{A}^{-1}\xv\big)\]
    for the $(t+1)\times (t+1)$ matrices $\mathbf{Q, D, A}$ described in Theorem~\ref{thm:main}.
\end{cor}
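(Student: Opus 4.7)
The plan is to derive Corollary~\ref{cor:SWE-tuple} as an immediate specialization of Theorem~\ref{thm:main-tuple}, exploiting the relationship between $\sse^{[\lambda]}_C$ and $\swe^{[\lambda]}_C$ that the text has already recorded. Concretely, I would substitute $x_{i1}=x_{i2}=\cdots=x_{in}:=x_i$ for each $i\in\{0,1,\dots,t\}$ into the identity of Theorem~\ref{thm:main-tuple}. Under this substitution the defining sum (\ref{eq:SSE-tuple-multiset}) collapses each product $\prod_{\ell\in M_j(X)}x_{j\ell}$ into $x_j^{|M_j(X)|}$, and since the multiset data $M_j(X)$ encodes precisely the symmetrized support sets $S^{[\lambda]}_j(\cv_1,\dots,\cv_\lambda)$ via $A^{[\lambda]}_C(X)$, the sum reassembles to the right-hand side of (\ref{eq:SWE-tuple}). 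In particular, the left-hand side $\sse^{[\lambda]}_{C^\perp}(\mathbf{X})$ of Theorem~\ref{thm:main-tuple} becomes $\swe^{[\lambda]}_{C^\perp}(\xv)$.

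For the right-hand side, the key point is that the transformation $\mathbf{QD}^\lambda\mathbf{A}^{-1}$ acts on $\mathbf{X}$ by left multiplication, so it mixes only the row-indices $i\in\{0,1,\dots,t\}$ and leaves the column-indices $\ell\in[n]$ untouched. Therefore the $(i,\ell)$-entry of $\mathbf{QD}^\lambda\mathbf{A}^{-1}\mathbf{X}$ equals $\sum_{j=0}^t (\mathbf{QD}^\lambda\mathbf{A}^{-1})_{ij}\,x_{j\ell}$. After the specialization $x_{j\ell}\mapsto x_j$, this $(i,\ell)$-entry becomes independent of $\ell$ and equals exactly the $i$-th coordinate of the vector $\mathbf{QD}^\lambda\mathbf{A}^{-1}\xv$. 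Applying the same collapsing observation to $\sse^{[\lambda]}_C$ in reverse then turns the right-hand side into $\frac{1}{|C|^\lambda}\swe^{[\lambda]}_C(\mathbf{QD}^\lambda\mathbf{A}^{-1}\xv)$, yielding the desired identity.

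There is essentially no obstacle here; the entire content is the commutation of the matrix action with the variable specialization, which holds because the matrix acts only on row-indices. The most careful bookkeeping step is verifying that the coefficient $A^{[\lambda]}_C(X)$ groups codeword tuples by their row-multiplicities $|M_j(X)|=|S^{[\lambda]}_j(\cv_1,\dots,\cv_\lambda)|$ in a way compatible with the collapsed monomials, which follows directly from the definitions in (\ref{eq:SSE-tuple-multiset}) and (\ref{eq:SWE-tuple}). Thus the corollary reduces to a one-line consequence of Theorem~\ref{thm:main-tuple}.
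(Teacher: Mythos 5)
Your proposal is correct and is essentially the paper's own argument: the paper notes immediately before the corollary that $\swe^{[\lambda]}_C$ is obtained from $\sse^{[\lambda]}_C$ by setting $x_{i1}=\cdots=x_{in}:=x_i$, and then cites Theorem~\ref{thm:main-tuple}. Your extra observation — that $\mathbf{QD}^\lambda\mathbf{A}^{-1}$ acts only on row-indices and hence commutes with the column-constant specialization — is the precise reason the substitution is legitimate, and is a useful bit of bookkeeping the paper leaves implicit.
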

\begin{rem}
    We note that $\swe^{[\lambda]}_C$ in (\ref{eq:SWE-tuple}) is a special case of the $r$-fold symmetric weight enumerator considered by Siap \cite{Siap1999}. However, $\sse_C$ in~(\ref{eq:SSE-multiset}) and $\sse^{[\lambda]}_C$ in (\ref{eq:SSE-tuple-multiset}) are neither generalizations nor special cases of the enumerators considered in \cite{Siap1999}.
\end{rem}
Each of the MacWilliams identities in Theorems~\ref{thm:main-support} and \ref{thm:main-tuple} and Corollary~\ref{cor:SWE-tuple} generalizes the classic MacWilliams identity for the symmetrized weight enumerator by Wood \cite{Wood1999} (Theorem~\ref{thm:MW-Wood}). Moreover, Theorem~\ref{thm:main-tuple} generalizes \cite[Theorem 6]{BrShWe15}. 

We close this section with an example.
\begin{ex}
    Consider the case when $R=\ZZ_{6}:=\{0,1,2,3,4,5\}$ is the ring of integers modulo 6 
    and choose $a_0=0$, $a_1=3$, $a_2=2$, $a_3=1$. 
    The lattice of principal ideals of $\ZZ_{6}$ under set inclusion is presented in Figure~\ref{fig:Z6} below.
\begin{figure}[ht]
    \centering
    \begin{tikzpicture}
        \draw (0,0) -- (1,1) -- (0,2);
        \draw (0,0) -- (-1,1) -- (0,2);
        \draw (0,0) node [fill=white] {$a_0 R$};
        \draw (-1,1) node [fill=white] {$a_1 R$};
        \draw (1,1) node [fill=white] {$a_2 R$};
        \draw (0,2) node [fill=white] {$a_3 R$};
    \end{tikzpicture}
    \caption{Hasse diagram for principal ideals of $\ZZ_{6}$.}
    \label{fig:Z6}
\end{figure}
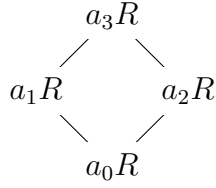

The matrices $\mathbf{A, D, Q}$ according to Theorem~\ref{thm:main} are
    \[\mathbf{A}=\begin{pmatrix}
    1 & 1 & 1 &1\\
    0 & 1 & 0 &1\\
    0 & 0 & 1 &1\\
    0 & 0 & 0 &1
\end{pmatrix},\quad
\mathbf{D}=\begin{pmatrix}
    1 & 0 & 0 & 0\\
    0 & 2 & 0 & 0\\
    0 & 0 & 3 & 0\\
    0 & 0 & 0 & 6
\end{pmatrix},\quad
\mathbf{Q}=\begin{pmatrix}
    1 & 1 & 1 &1\\
    1 & 0 & 1 &0\\
    1 & 1 & 0 &0\\
    1 & 0 & 0 &0
\end{pmatrix}.\]
The transformation matrix with respect to Theorem~\ref{thm:main-tuple} for $\lambda=1$ and $\lambda=2$ respectively are
\[\mathbf{QDA}^{-1}=
\begin{pmatrix*}[r]
    1 & 1 & 2 & 2\\
    1 & -1 & 2 & -2\\
    1 & 1 & -1 &-1\\
    1 & -1 & -1 & 1
\end{pmatrix*}
\quad\text{and}\quad
\mathbf{QD}^2 \mathbf{A}^{-1}=
\begin{pmatrix*}[r]
    1 & 3 & 8 & 24\\
    1 & -1 & 8 & -8\\
    1 & 3 & -1 &-3\\
    1 & -1 & -1 & 1
\end{pmatrix*}.\]
Now consider the linear code $C=\{(0,0),(1,4),(2,2),(3,0),(4,4),(5,2)\}$ of length 2 over $R$. Here, $C^\perp=\{(0,0),(2,1),(4,2),(0,3),(2,4),(4,5)\}$. By definition (\ref{eq:SSE-tuple-multiset}),
\[
  \begin{array}{l@{\!}l@{\;\,}c@{\;\,}l@{\;}c@{\;}r@{\;}c@{\;}r@{\;}c@{\;}r}
    \sse^{[1]}_C&(\mathbf{X})         &=&x_{01} x_{02}&+& x_{02} x_{11}&+&2x_{21}x_{22}&+& 2x_{22}x_{31}\,;\\[.5mm]
    \sse^{[1]}_{C^\perp}&(\mathbf{X}) &=&x_{01} x_{02}&+& x_{01} x_{12}&+&2x_{21}x_{22}&+& 2x_{21}x_{32}\,;\\[.5mm]
    \sse^{[2]}_C&(\mathbf{X})         &=&x_{01} x_{02}&+&3x_{02} x_{11}&+&8x_{21}x_{22}&+&24x_{22}x_{31}\,;\\[.5mm]
    \sse^{[2]}_{C^\perp}&(\mathbf{X}) &=&x_{01} x_{02}&+&3x_{01} x_{12}&+&8x_{21}x_{22}&+&24x_{21}x_{32}\,.
  \end{array}
\]
We can check that
\[
  \begin{array}{l@{\;\,}c@{\;\,}l@{\!}r@{\;}c@{\;}r@{\;}c@{\;}r@{\;}c@{\;}r@{\;}c@{\;}r@{\;}c@{\;}r@{\;}c@{\;}r@{\;}r@{\,}r}
     \frac{1}{6}\sse^{[1]}_C\big(\mathbf{QDA}^{-1}\mathbf{X}\big)
    &=&\frac{1}{6}\Big(& (x_{01}&+&x_{11}&+&2x_{21}&+&2x_{31})&(x_{02}&+&x_{12}&+&2x_{22}&+&2x_{32})&\\
    & &               +& (x_{02}&+&x_{12}&+&2x_{22}&+&2x_{32})&(x_{01}&-&x_{11}&+&2x_{21}&-&2x_{31})&\\
    & &               +&2(x_{01}&+&x_{11}&-& x_{21}&-& x_{31})&(x_{02}&+&x_{12}&-& x_{22}&-& x_{32})&\\
    & &               +&2(x_{02}&+&x_{12}&-& x_{22}&-& x_{32})&(x_{01}&-&x_{11}&-& x_{21}&+& x_{31})&\Big)\\
    &=&\sse^{[1]}_{C^\perp}(\mathbf{X})\;,\hspace*{-20mm}
\end{array}
\]
and
\[
  \begin{array}{l@{\;\,}c@{\;\,}l@{\!\!\!}r@{\;}c@{\;}r@{\;}c@{\;}r@{\;}c@{\;}r@{\;}c@{\;}r@{\;}c@{\;}r@{\;}c@{\;}r@{\;}r@{\;}r@{\;}c@{\!\!}r}
      \frac{1}{36}\sse^{[2]}_C\big(\mathbf{QD}^2\mathbf{A}^{-1}\mathbf{X}\big)
    &=&\frac{1}{36}\Big(&  (x_{01}&+&3x_{11}&+&8x_{21}&+&24x_{31})&(x_{02}&+&3x_{12}&+&8x_{22}&+&24x_{32})&\\
    & &                +& 3(x_{02}&+&3x_{12}&+&8x_{22}&+&24x_{32})&(x_{01}&-& x_{11}&+&8x_{21}&-& 8x_{31})&\\
    & &                +& 8(x_{01}&+&3x_{11}&-& x_{21}&-& 3x_{31})&(x_{02}&+&3x_{12}&-& x_{22}&-& 3x_{32})&\\
    & &                +&24(x_{02}&+&3x_{12}&-& x_{22}&-& 3x_{32})&(x_{01}&-& x_{11}&-& x_{21}&+&  x_{31})&\Big)\\
    &=&\sse^{[2]}_{C^\perp}(\mathbf{X})\;,\hspace*{-20mm}
  \end{array}
\]
as stated in Theorem~\ref{thm:main-tuple}. Note that the case when $\lambda=1$ is exactly Theorem~\ref{thm:main-support}. We can also easily check that Corollary~\ref{cor:SWE-tuple} holds by setting $x_{i1}=x_{i2}:=x_i$ for all $i=0,1,2,3$.
\end{ex}


\end{document}